\theoremstyle{plain}
\newtheorem{theorem}{Theorem}[section] 
\newtheorem{lemma}[theorem]{Lemma}
\newtheorem{proposition}[theorem]{Proposition}
\newtheorem{corollary}[theorem]{Corollary}
\theoremstyle{definition}
\newtheorem{definition}[theorem]{Definition}
\newtheorem{ass}[theorem]{Assumption}
\def\d{\mathrm{d}} 
\def\eps{{\varepsilon}} 
\def\ue{u^\eps}  
\def\De{D^\eps}  
\def\R{{\mathbb{R}}}
\def\C{\mathbb{C}}
\def\Z{{\mathbb{Z}}}
\def\N{{\mathbb{N}}}
\def\Te{\mathcal{T}^\eps}
\def\p{{\partial}}
\def\Id{{\mathrm{Id}}}
\def\O{{\Omega}}
\def\grad{\nabla}
\def\div{\mathrm{div}}
\newcommand{\weak}{\longrightharpoonup\,}
\newcommand{\longrightharpoonup}{\mathrel{\relbar\joinrel\rightharpoonup}}
\newcommand{\nx}[2]{\left\lVert#1\right\rVert_\text{{$#2$}}}
\newcommand{\ud}{\ \mathrm{d}}
\newcommand{\udvol}{\ \mathrm{d}\mathrm{vol}_M}
\renewcommand{\d}{\mathrm{d}}
\begin{document}




\title{An Application of Periodic Unfolding on Manifolds}


\author{S\"oren Dobbersch\"utz\thanks{Nano-Science Center, University of Copenhagen, Universitetsparken 5,
2100 K\o{}benhavn, Denmark. \mbox{E-Mail:}~\texttt{sdobber@nano.ku.dk}}}

\maketitle

\begin{abstract}
We show how the newly developed method of Periodic Unfolding on Riemannian manifolds can be applied to PDE problems: We consider the homogenization of an elliptic model problem. In the limit, we obtain a generalization of the well-known limit- and cell-problem. By constructing an equivalence relation of atlases, one can show the invariance of the limit problem with respect to this equivalence relation. This implies e.g.\ that the homogenization limit is independent of change of coordinates or scalings of the reference cell. \\

\textbf{Keywords:}
Periodic Unfolding, Riemannian Manifolds, Homogenization \\

\textbf{MSC} 35B27,  35J40, 53A99
\end{abstract}





\section{Introduction}

This paper deals with applications of Periodic Unfolding on Riemannian manifolds. The method has been developed in \cite{dobo_perunf} and \cite{dobo_constrpumf} to allow for periodic homogenization of problems posed on "nonflat" objects like surfaces or spherical zones (as opposed to "flat" domains in $\R^n$). Prospective applications come from the field of catalysis: For example in the design and operation of heterogeneous catalysts for the depollution of automotive exhaust gases, the surface structure and -composition plays an important role for the effectiveness of the catalytic process, see e.g.\ \cite{sksu_spraypyrol} for such findings or \cite{hetcat} for an overview of the field. However, since the chemical activity takes place at the nonflat boundary of these microscopic porous particles, one needs methods to derive effective descriptions of these materials, for example for the implementation of efficient numerical simulations.

The field of (mathematical) homogenization emerged around the 1970's, see Bensoussan, Lions, and Papanicolao~\cite{blp_asymtanal} as well as Sanchez-Palencia~\cite{sanpal}. 
In general, one considers a family of operators $\mathcal{L}^\eps$ for a scale parameter $\eps>0$, and tries to find a limit $u_0$ of the sequence $u^\eps$ and an operator $\mathcal{L}_0$ for $\eps\rightarrow 0$, such that the solutions of the operator equation $\mathcal{L}^\eps u^\eps = 0$ converge to $u_0$ and fulfill $\mathcal{L}_0 u_0 = 0$ (in some specified sense). This problem is then considered to be an ''effective'' or approximate description of the original problem for small $\eps$.

Several methods have been devised to study such processes effectively in the case that $\mathcal{L}^\eps$ is a family of partial differential operators (without claiming to be exhaustive): Starting from the heuristic method of asymptotic expansion, Tartar developed the method of oscillating test functions to allow a mathematical proof of convergence (see e.g.\ \cite{blp_asymtanal}). Being a rather tedious method, the proof of convergence was then facilitated by the invention of the notion of two-scale convergence by Nguetseng \cite{ng_2s1} and Allaire \cite{al_2s2}. See also \cite{cido_intrhom} for an introduction to the field. 

A more recent development is the technique of Periodic Unfolding, introduced by Cioranescu, Damlamian and Griso \cite{cidagr_perunhom} based on an idea of Arbogast, Douglas and Hornung \cite{ardoho_dbl-poros}. With the help of this method, one can replace the two-scale convergence by well-established notions of convergence (like weak and strong convergence) in Banach spaces. The reader is referred to \cite{cidagr_pu} for a well-written introduction to Periodic Unfolding and its various applications.

However, one drawback of all methods derived so far is that they can only be applied to (flat) subsets of the Euclidean space $\R^n$. Although there are some generalizations to surfaces available, see \cite{aldaho_2s} and \cite{cidoza_perunperf}, they can not be applied to surfaces having itself a periodic structure. Therefore we extended the notion of Periodic Unfolding to certain compact Riemannian manifolds, see \cite{dobo_constrpumf}, \cite{dobo_perunf}, and Section \ref{sec:pumf}. In this work, we show how the theoretical results can be applied to the standard elliptic homogenization problem. Although the limit problem (which is derived in Section~\ref{sec:applell}) looks very similar to the usual limit obtained in the case of a domain in $\R^n$ (compare e.g.\ \cite{cido_intrhom}), we would like to point the reader to the following particularities: Our result is not restricted to the case of a ''flat'' domain, but also applies for example to suitable curved surfaces. We also admit general Riemannian metrics and thus extend the known results, where implicitly only the standard (Euclidean) metric has been used. (This can also be seen as a stationary version of homogenization with evolving microstructure as developed by Peter \cite{pe_domevolve}.) Moreover, we show that the limit is independent with respect to the choice of equivalent atlases. This and the coresponding equivalence relation is dealt with in 
Section~\ref{sec:RMequivatl}.

\section{Periodic Unfolding on Manifolds}
\label{sec:pumf}

\subsection{Notation}

Let $M\subset \R^m$ be a smooth compact oriented Riemannian manifold (in the sense of Schwarz~\cite{schw_hodge}) with $\dim(M)=n$; $n,m\in \N$. We allow $M$ to be with or without boundary. The tangent space $T_xM$ at $x\in M$ gives rise to the tangent bundle $TM$. Let there be given a smooth Riemannian metric $g_M\in \Gamma(T^*M \otimes T^*M)$, i.e. a smooth section of the product of the cotangent bundle $T^*M$. For $x\in M$, $g_M(x)$ defines a scalar product on the tangent space $T_xM$. Moreover, we consider a finite atlas $\mathscr{A}=\{ (U_\alpha,\phi_\alpha);\alpha \in I \}$ with index set $I$. (Note that we do not consider equivalence classes of atlases here, but we always refer to a fixed one.) The corresponding partition of unity subordinate to $\{ U_\alpha;\alpha\in I \}$ is denoted by $\{ \pi_\alpha;\alpha \in I \}$. A chart $\phi_\alpha=(x^1,\dots, x^n):U_\alpha \rightarrow \R^n$ induces a set of tangent vectors, the local basis vectors $\frac{\p}{\p x^i}$, $i=1,\dots,n$. They allow a representation of $g_M$ in local coordinates as $g_M=\sum_{i,j=1}^n g_{ij} \,\mathrm{d}x^i \otimes \mathrm{d}x^j$, where $g_{ij}= g_M(\frac{\p}{\p x^i}, \frac{\p}{\p x^j})$. By defining the matrix $G=[g_{ij}]_{i,j=1,\dots,n}$, the volume form on $M$ is given by $\mathrm{dvol}_M:=\sqrt{|\det G|} \ud x^1\wedge\dots\wedge\ud x^n$. Similarly, $\phi_\alpha$ also induces pushforward and pullback operators (designated as $(\phi_\alpha)_*$ and $(\phi_\alpha)^*= (\phi_\alpha^{-1})_*$, resp.) for functions, forms and vector fields. E.g.\ for a function $f:U_\alpha \rightarrow \R$, one has $\phi_\alpha f = f\circ \phi_\alpha^{-1}$. 
Details concerning these notions and Riemannian manifolds can be found in \cite{globan}, for example.


Lebesgue-spaces on $M$ are as usual denoted by $L^p(M)$, where $p\in [1,\infty]$ denotes the order of integrability. Similarly, we denote the Lebesgue space of sections of the tangent bundle by $L^p TM$. The Hilbert space of one time weakly differentiable functions in $L^2(M)$ will be denoted by $H^1(M)$.  The subscript $\#$ indicates periodicity, and the subscript $0$ designates a vanishing trace (in the case of $H^1_0(M)$) or a compact support in the case of spaces of differentiable functions.

Denote by $Y:=[0,1)^n$ the reference cell in $\mathbb{R}^n$, endowed with the topology of the torus. We remind the reader of the following well-known notation in the field of unfolding: For $x\in \mathbb{R}^n$, the representation $x=\varepsilon\left[\frac{x}{\varepsilon} \right] + \varepsilon \left\{\frac{x}{\varepsilon} \right\}$ holds, where for $z\in \R^n$ the quantity $[z]=(b_1,\dots,b_n)\in \mathbb{Z}^n$ is the unique vector such that $\{ z\}:=z-[z]\in Y$. For a function $f:\mathbb{R}^n\longrightarrow \mathbb{R}$, the unfolded function $\mathcal{T}^{\varepsilon}:\mathbb{R}^n \times Y \longrightarrow \mathbb{R}$ is defined by
$\mathcal{T}^{\varepsilon}(f)(x,y)=f(\varepsilon \left[\frac{x}{\varepsilon} \right] + \varepsilon y)$.

In the sequel, we recall the basic notions from \cite{dobo_constrpumf}.  

\subsection{Unfolding Operators and Integral Identities}


\begin{definition}
We say that an object is $\eps_\mathscr{A}$-periodic, if it is $Y$-periodic in $\R^n$ after transformation with a chart $\phi$ from a designated atlas $\mathscr{A}$.
\end{definition}

For example, if we take a smooth $\eps Y$-periodic function $f:Y \longrightarrow \R$ and a $\phi_\alpha \in \mathscr{A}$, then $\tilde{f}:= f \circ \phi_\alpha= \phi_\alpha^* f$ is $\eps_\mathscr{A}$-periodic on $U_\alpha$. One can also think of $M$ itself being $\eps_\alpha$-periodic, if we image $M$ to represent a material body whose properties (for example heat conductivity etc.) vary in an $\eps_\mathscr{A}$-periodic way. We need the following compatibility condition:

\begin{definition}[UC-criterion]
\label{def:RMuc}
The atlas $\mathscr{A}$ is said to be \emph{compatible with unfolding} (UC) if for all $\alpha, \beta \in I$ with $U_\alpha \cap U_\beta \not= \emptyset$ and for all $\eps $ there exists a $k(\eps) \in \Z^n$ such that
$\phi_\alpha = \phi_\beta + \eps\sum_{i=1}^n k_i(\eps) e_i$ in   $U_\alpha \cap U_\beta$, 
where $e_i$ denotes the $i$-th unit vector in $\R^n$.
\end{definition}
This definition implies that it is a necessary condition for $M$ to be parallelizable. Examples for manifolds which fulfill this criterion include the unit sphere in $\R^2$ and a spherical segment in $\R^3$. 
We can now define unfolding operators locally:

\begin{definition}
\label{def:RMoperator}
Choose a chart $\phi \in \mathscr{A}$ with corresponding domain $U \subset M$.
\begin{enumerate}
\item For a function $f:U\longrightarrow \R$ we define
$\Te_\phi(f)=(\phi \times \Id)^*\, \Te(\phi_* f)$, 
where $\Te$ denotes the usual unfolding operator in $\R^n$. Obviously $\Te_{\Id}= \Te$.

\item For a vector field $F \in \mathfrak{X}(U)$ define analogously
$\Te_\phi(F)=(\phi \times \Id)^*\, \Te(\phi_* F)$. 

\end{enumerate}
\end{definition}

With the help of the partition of unity, we construct a global operator as follows:
\begin{definition}
The global unfolding operator $\Te_\mathscr{A}$ with respect to the atlas $\mathscr{A}$ is defined as
\[
\Te_\mathscr{A}(\cdot) = \sum_{\alpha\in I}\pi_\alpha \Te_{\phi_\alpha}(\cdot |_{U_\alpha}).
\]
\end{definition}
Equivalent definitions can be found in the references cited above.
The next result shows that the unfolding operators are well defined on sets where two charts overlap, giving the well-definedness of $\Te_\mathscr{A}$ on $M$:
\begin{proposition}
\label{prop:RMcompatib}
Let $\phi_\alpha$ and $\phi_\beta$ be two charts of an UC-atlas $\mathscr{A}$ with $U_\alpha \cap U_\beta \not= \emptyset$. Then
$\Te_{\phi_\alpha}= \Te_{\phi_\beta} \quad \text{on } U_\alpha \cap U_\beta$.
\end{proposition}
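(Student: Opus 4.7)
The plan is a straightforward unfolding of the definitions, using the UC-criterion at the key step. Let me fix $x \in U_\alpha \cap U_\beta$ and $y \in Y$, and evaluate both sides on a function $f$ (the proof for a vector field $F$ is analogous, once one tracks how the pullback acts on the components). Writing out Definition~\ref{def:RMoperator}, the left-hand side is
\[
\Te_{\phi_\alpha}(f)(x,y) = \Te((\phi_\alpha)_* f)(\phi_\alpha(x),y) = f\bigl(\phi_\alpha^{-1}(\eps[\phi_\alpha(x)/\eps] + \eps y)\bigr),
\]
and similarly the right-hand side is $f(\phi_\beta^{-1}(\eps[\phi_\beta(x)/\eps] + \eps y))$. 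So the task reduces to checking the pointwise identity
\[
\phi_\alpha^{-1}\bigl(\eps[\phi_\alpha(x)/\eps] + \eps y\bigr) = \phi_\beta^{-1}\bigl(\eps[\phi_\beta(x)/\eps] + \eps y\bigr).
\]

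Next I would invoke the UC-criterion. It supplies a vector $k(\eps) \in \Z^n$ (depending on $\alpha,\beta,\eps$ but constant on $U_\alpha \cap U_\beta$) such that $\phi_\alpha(x) = \phi_\beta(x) + \eps k(\eps)$. Dividing by $\eps$, the translate $k(\eps) \in \Z^n$ commutes with the integer-part operator $[\cdot]$, which gives $[\phi_\alpha(x)/\eps] = [\phi_\beta(x)/\eps] + k(\eps)$. Consequently
\[
\eps[\phi_\alpha(x)/\eps] + \eps y = \bigl(\eps[\phi_\beta(x)/\eps] + \eps y\bigr) + \eps k(\eps).
\]
The UC-criterion also tells us that $\phi_\alpha^{-1}(\,\cdot\,) = \phi_\beta^{-1}(\,\cdot\, - \eps k(\eps))$ as maps on the appropriate image set, so substituting into the previous display yields exactly the desired equality.

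The only non-routine point is a domain check: one must verify that the point $\eps[\phi_\beta(x)/\eps] + \eps y$ actually lies in $\phi_\beta(U_\alpha \cap U_\beta)$ (equivalently, in the image $\phi_\alpha(U_\alpha \cap U_\beta)$ after shift), so that both inverses are defined. Under the UC-criterion the two chart images differ only by an integer translation by $\eps k(\eps)$, and the fractional part $y \in Y$ together with the integer part $[\phi_\beta(x)/\eps]$ is precisely how one reconstructs a point near $\phi_\beta(x)$ in the standard unfolding setup; so the check amounts to the same boundary considerations as in the flat case treated in \cite{dobo_constrpumf}, and I would simply cite those conventions. With that in place, the computation above is a clean chain of identities and no further work is required.
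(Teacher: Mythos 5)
Your argument is correct, and it is the intended one: the paper itself states Proposition~\ref{prop:RMcompatib} without proof (deferring to the construction papers \cite{dobo_constrpumf}, \cite{dobo_perunf}), and the whole point of the UC-criterion is exactly the step you isolate, namely that a translation by $\eps k(\eps)$ with $k(\eps)\in\Z^n$ commutes with the operator $z\mapsto\eps[z/\eps]$, so that $\eps[\phi_\alpha(x)/\eps]+\eps y$ and $\eps[\phi_\beta(x)/\eps]+\eps y$ name the same point of $M$ under the respective inverse charts. The one place where "analogous" is worth a line of justification is the vector field case: there the pushforward involves the Jacobian of the transition map $\phi_\beta\circ\phi_\alpha^{-1}$, but under the UC-criterion this map is a translation, so its Jacobian is the identity, the local bases $\frac{\p}{\p x^i}$ induced by the two charts coincide on $U_\alpha\cap U_\beta$, and the claim reduces componentwise to the function case you treated. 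Your handling of the domain issue (boundary cells not entirely contained in the chart image) by appeal to the conventions of the flat unfolding construction is the same resolution used in the cited references, so nothing further is required.
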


For the unfolding of integrals, we use the following notation:
\begin{definition}
A sequence $\{f^\eps\}$ in $L^1(M)$ is said to fulfill the \emph{unfolding criterion on manifolds} (UCM) if there exists a function $r:\R^+ \longrightarrow \R$ such that $r(\eps) \longrightarrow 0$ as $\eps \rightarrow 0$ and
\[
\int_M f^\eps \udvol = \frac{1}{|Y|} \int_{M\times Y} \Te_\mathscr{A}(f^\eps) \Te_\mathscr{A}(\sqrt{|G|}) \ud y \ud x + r(\eps).
\]
We write in this case
$\int_M f^\eps \udvol \simeq \frac{1}{|Y|} \int_{M\times Y} \Te_\mathscr{A}(f^\eps) \Te_\mathscr{A}(\sqrt{|G|}) \ud y \ud x$.
\end{definition}
In \cite{dobo_constrpumf} it is shown that the following functions $f$ and sequences $(f^\eps)$ fulfill the (UCM)-criterion:
\begin{enumerate}
\item $f\in L^1(M)$
\item $\{ f^\eps\}\subset L^1(M)$ such that $\nx{f^\eps}{L^1(M)}$ is bounded independently of $\eps$.
\item Since the functions are defined on a compact manifold, the same holds true if we replace $L^1(M)$ with $L^p(M)$ with $1\leq p \leq \infty$.
\end{enumerate}


\begin{proposition}
\label{prop:RMcont}
The operators
\[
\Te_\mathscr{A}:L^p(M) \longrightarrow L^p(M\times Y)
\]
are linear and continuous with operator norm less than $(C_Y(1+ \mathrm{card}(I)\delta))|Y| )^\frac{1}{p}$, where $\delta>0$ is arbitrary,  $\eps \leq \eps_0(\delta)$ and $C_Y$ denotes a constant depending on the coefficients of $g_M$.
\end{proposition}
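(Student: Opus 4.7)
My plan is to separate linearity (which is immediate) from the operator-norm bound, which I would establish by combining a chart-local estimate with a partition-of-unity assembly exploiting the UC-compatibility of Proposition~\ref{prop:RMcompatib}. Linearity is clear because $\Te_\mathscr{A}$ is built from linear operations: pullback $\phi_\alpha^*$, pushforward $\phi_{\alpha,*}$, the Euclidean unfolding $\Te$, and weighting by $\pi_\alpha$.

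For the local step, fix $(U_\alpha,\phi_\alpha)\in\mathscr{A}$. I would compute $\int_{U_\alpha\times Y}|\Te_{\phi_\alpha}(f|_{U_\alpha})|^p\udvol_M\ud y$ by changing variables $\xi=\phi_\alpha(x)$, which transforms $\udvol_M$ into $\sqrt{|G\circ\phi_\alpha^{-1}|}\ud\xi$. Since $g_M$ is continuous and $M$ is compact, this coefficient is uniformly bounded by a constant depending only on $g_M$, absorbed into $C_Y$. The remaining integral on $\phi_\alpha(U_\alpha)\times Y\subset\R^n\times Y$ is controlled by the standard Euclidean $L^p$-estimate $\|\Te g\|_{L^p(\Omega\times Y)}^p\leq|Y|(1+\delta)\|g\|_{L^p(\Omega)}^p$ valid for $\eps\leq\eps_0(\delta)$, the $\delta$-correction arising from the $\eps$-cells near $\partial(\phi_\alpha(U_\alpha))$ that are not aligned with the $\eps$-grid. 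Reversing the transformation yields $\|\Te_{\phi_\alpha}(f|_{U_\alpha})\|_{L^p(U_\alpha\times Y)}^p\leq C_Y|Y|(1+\delta)\|f\|_{L^p(U_\alpha)}^p$.

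For the global assembly, I would invoke Proposition~\ref{prop:RMcompatib}: on $U_\alpha\cap U_\beta$ the local unfolding operators agree, so together with $\sum_\alpha\pi_\alpha=1$ one has the pointwise identity $|\Te_\mathscr{A}(f)(x,y)|^p=\sum_\alpha\pi_\alpha(x)|\Te_{\phi_\alpha}(f|_{U_\alpha})(x,y)|^p$ as a convex combination of equal numbers (no Jensen loss). To avoid the naive overcount $\sum_\alpha\|f\|_{L^p(U_\alpha)}^p\leq\mathrm{card}(I)\|f\|_{L^p(M)}^p$ spoiling the leading constant, I would pass to a disjoint measurable partition $M=\bigsqcup_\alpha V_\alpha$ with $V_\alpha\subset U_\alpha$, on which $\Te_\mathscr{A}(f)=\Te_{\phi_\alpha}(f|_{U_\alpha})$. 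Since for $x\in V_\alpha$ the argument $\phi_\alpha^{-1}(\eps[\phi_\alpha(x)/\eps]+\eps y)$ lies in an $\eps$-tube around $V_\alpha$, the chart-wise estimate sees $f$ only on such tubes; summing over $\alpha$ then gives $\|f\|_{L^p(M)}^p$ plus an overlap defect supported in the $\eps$-tube around $\bigcup_\alpha\partial V_\alpha$. Bounding that defect by $\mathrm{card}(I)\delta\|f\|_{L^p(M)}^p$ for $\eps$ small yields $\|\Te_\mathscr{A}(f)\|_{L^p(M\times Y)}^p\leq C_Y(1+\mathrm{card}(I)\delta)|Y|\|f\|_{L^p(M)}^p$, which is the claim.

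The principal difficulty is exactly this last bookkeeping: controlling both the Euclidean $\Te$-boundary layer and the $\eps$-tube around $\bigcup_\alpha\partial V_\alpha$ uniformly in $f\in L^p(M)$, so that one genuinely gets an operator-norm bound. Both layers have vanishing volumes as $\eps\to 0$, and their $L^p$-defects must be merged into a single $\mathrm{card}(I)\delta$-correction; the factor $\mathrm{card}(I)$ simply records that such a boundary error is incurred once per chart in the finite atlas $\mathscr{A}$.
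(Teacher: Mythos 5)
First, note that the paper itself does not prove Proposition~\ref{prop:RMcont}; it is recalled from the companion work \cite{dobo_constrpumf}, so your argument can only be measured against what a correct proof must look like. Your overall architecture (linearity is immediate; a chart-local reduction to the Euclidean unfolding estimate with the metric coefficient $\sqrt{|G|}$ absorbed into $C_Y$; global assembly via the partition of unity and Proposition~\ref{prop:RMcompatib}) is sound and is surely the intended skeleton. The convexity identity $|\Te_\mathscr{A}(f)|^p=\sum_\alpha\pi_\alpha|\Te_{\phi_\alpha}(f|_{U_\alpha})|^p$, justified by the agreement of the local operators on overlaps, is a correct and useful observation.

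The gap is in the last step, and it is a genuine one: you propose to obtain the factor $(1+\mathrm{card}(I)\delta)$ by bounding the $L^p$-mass of $f$ on the $\eps$-tubes around $\bigcup_\alpha\partial V_\alpha$ (and on the Euclidean boundary layer) by $\mathrm{card}(I)\,\delta\,\|f\|_{L^p(M)}^p$ for $\eps\leq\eps_0(\delta)$. This cannot hold \emph{uniformly in} $f\in L^p(M)$: for any fixed $\eps>0$ one may choose $f$ supported entirely inside those tubes, so that the ``defect'' equals $\|f\|_{L^p(M)}^p$ rather than $\delta\|f\|_{L^p(M)}^p$. The smallness of the \emph{volume} of an $\eps$-layer does not translate into smallness of the $L^p$-norm of an arbitrary $f$ on that layer, so your argument does not produce an operator-norm bound of the stated form. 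The correct source of the $\delta$ is different: one compares the weights $\pi_\alpha(x)\sqrt{|G(x)|}$ with their unfolded counterparts $\Te_{\phi_\alpha}(\pi_\alpha\sqrt{|G|})(x,y)$; since the evaluation points differ by $O(\eps)$ and these weights are uniformly continuous on the compact $M$, the discrepancy is at most $\delta$ for $\eps\leq\eps_0(\delta)$. The main terms then collapse via the exact Euclidean identity $\int_{\Omega\times Y}\Te(g)\ud\xi\ud y\leq|Y|\int_\Omega g\ud\xi$ (for $g\geq0$; no $(1+\delta)$ is needed there) and sum to $C_Y|Y|\|f\|_{L^p(M)}^p$ because $\sum_\alpha\pi_\alpha=1$, while each chart contributes one error term of size $\delta\cdot C_Y|Y|\|f\|_{L^p(M)}^p$ --- multiplying the \emph{full} norm of $f$, hence uniform in $f$ --- which accounts for the factor $\mathrm{card}(I)\delta$. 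You should replace your tube-volume bookkeeping by this uniform-continuity comparison of the weights.
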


\subsection{Unfolding of Gradients}

Since gradients are vector fields strongly connected to the Riemannian metric, we need the following

\begin{definition}
For fixed $x\in M$, $\eps>0$ associate to the Riemannian metric $g_M$ on $M$ a $(x,\eps)$-dependent metric $g_Y^{(x,\eps)}$ on $Y$ via $g_Y^{(x,\eps)}(x,\cdot)= \Te_\mathscr{A}(g_M)(x,\cdot)$ in the sense that
\[
\Te_\mathscr{A}(g_M) = \Te_\mathscr{A}( \sum_{i,j} g_{ij}\, \d x^i \otimes \d x^j   ) = \sum_{i,j} \Te_\mathscr{A}(g_{ij}) \,\d y^i \otimes \d y^j.
\]
\end{definition}

\begin{proposition}
\label{prop:RMrm}
Let $F,G \in \mathfrak{X}(M)$. Then
\begin{equation*}
\Te_\mathscr{A}(g_M (F,G) )(x,y) = g_Y^{(x,\eps)}( \Te_\mathscr{A}(F)(x,y), \Te_\mathscr{A}(G)(x,y)   ).
\end{equation*}
\end{proposition}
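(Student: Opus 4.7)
The plan is to reduce to a single chart via the partition of unity, use the local definition $\Te_\phi = (\phi \times \Id)^*\,\Te\,\phi_*$, and exploit the multiplicativity of the Euclidean unfolding operator $\Te$ on $\R^n$. Compatibility of the local operators on overlaps (Proposition~\ref{prop:RMcompatib}) ensures that the resulting identity patches together consistently.

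First I would fix a chart $\phi_\alpha \in \mathscr{A}$ with domain $U_\alpha$ and expand everything in local coordinates: write $F = \sum_i F^i \frac{\p}{\p x^i}$, $G = \sum_j G^j \frac{\p}{\p x^j}$, and $g_M = \sum_{i,j} g_{ij}\, \d x^i \otimes \d x^j$, so that $g_M(F,G) = \sum_{i,j} g_{ij} F^i G^j$ on $U_\alpha$. Pushing forward by $\phi_\alpha$ sends these to the corresponding coordinate objects on $\phi_\alpha(U_\alpha) \subset \R^n$, and the Euclidean unfolding operator $\Te$ is multiplicative on products of scalar functions, so
\[
\Te\bigl(\phi_{\alpha,*}(g_{ij} F^i G^j)\bigr) = \Te(\phi_{\alpha,*} g_{ij})\,\Te(\phi_{\alpha,*} F^i)\,\Te(\phi_{\alpha,*} G^j).
\]
Pulling back by $(\phi_\alpha \times \Id)^*$ and summing over $i,j$ then gives
\[
\Te_{\phi_\alpha}\bigl(g_M(F,G)\bigr) = \sum_{i,j}\Te_{\phi_\alpha}(g_{ij})\,\Te_{\phi_\alpha}(F)^i\,\Te_{\phi_\alpha}(G)^j,
\]
where $\Te_{\phi_\alpha}(F)^i$ denotes the $i$-th component of the unfolded vector field in the $\frac{\p}{\p y^i}$-basis on $Y$. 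By the definition of $g_Y^{(x,\eps)}$, the right-hand side is exactly $g_Y^{(x,\eps)}(\Te_{\phi_\alpha}(F), \Te_{\phi_\alpha}(G))$.

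Next I would globalize via the partition of unity. Using Proposition~\ref{prop:RMcompatib}, the local operators $\Te_{\phi_\alpha}$ agree on overlaps, so the per-chart identity assembles to
\[
\Te_\mathscr{A}(g_M(F,G)) = \sum_\alpha \pi_\alpha\,\Te_{\phi_\alpha}(g_M(F,G)) = \sum_\alpha \pi_\alpha\, g_Y^{(x,\eps)}(\Te_{\phi_\alpha}(F),\Te_{\phi_\alpha}(G)),
\]
and the agreement on overlaps allows replacing $\Te_{\phi_\alpha}(F)$, $\Te_{\phi_\alpha}(G)$ with $\Te_\mathscr{A}(F)$, $\Te_\mathscr{A}(G)$ inside each summand; since $\sum_\alpha \pi_\alpha \equiv 1$, the claim follows.

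The main obstacle is the bookkeeping step verifying that the pushforward-unfold-pullback procedure for vector fields really does produce the expected components in the $\frac{\p}{\p y^i}$-basis, so that the multiplicative identity $\Te(fF^i) = \Te(f)\Te(F^i)$ on $\R^n$ translates correctly into a pointwise metric identity on $M \times Y$. This is essentially a naturality check for the local coordinate representation of $g_M$ under $\Te_\phi$, and once confirmed in one chart, the rest is pure $(UC)$-compatibility plus summing a partition of unity.
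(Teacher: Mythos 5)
Your argument is correct: the paper itself states Proposition~\ref{prop:RMrm} without proof (it is recalled from \cite{dobo_constrpumf}), and your route --- expanding $g_M(F,G)=\sum_{i,j}g_{ij}F^iG^j$ in a chart, using that the Euclidean unfolding $\Te$ is multiplicative because it is just composition with $x\mapsto \eps[\frac{x}{\eps}]+\eps y$, identifying the right-hand side via the defining relation $\Te_\mathscr{A}(g_M)=\sum_{i,j}\Te_\mathscr{A}(g_{ij})\,\d y^i\otimes\d y^j$, and patching with Proposition~\ref{prop:RMcompatib} and $\sum_\alpha\pi_\alpha\equiv 1$ --- is exactly the intended one. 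The ``naturality check'' you flag is indeed the only point of substance, and it holds by the componentwise definition of $\Te_\phi$ on vector fields (consistent with Proposition~\ref{prop:RMgradient}), so there is no gap.
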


We need three different gradient operators, which we denote by $\grad_M$, $\grad_Y^{(x, \eps)}$ and $\grad_Y^{(x)}$, resp.\ (we will use the same notation for the corresponding divergence operators): 
\begin{itemize}
\item $\grad_M$ denotes the gradient on $M$ with respect to the metric $g_M$.
\item For fixed $x\in M$ and $\eps>0$, $\grad_Y^{(x,\eps)}$ denotes the gradient on $Y$ with respect to the parameter-dependent metric $g_Y^{(x,\eps)}$.
\item Finally, for fixed $x\in M$ the operator $\grad_Y^{(x)}$ is defined to be the gradient on $Y$ with respect to the parameter-dependent metric $g_Y^{(x)}:=\lim_{\eps \rightarrow 0} \Te_\mathscr{A}(g_M)(x,\cdot)$, i.e.\ with respect to the metric on $Y$ with metric coefficients $g_Y^{(x)}(\frac{\p}{\p y^i}, \frac{\p}{\p y^j})= g_{ij}(x)$.
\end{itemize}

\begin{proposition}
\label{prop:RMgradient}
Let $f:M\longrightarrow \R^n$ be a differentiable function, and let $F\in \mathfrak{X}(M)$ be a differentiable vector field. Then the identities
\begin{gather*}
\eps \Te_\mathscr{A}(\grad_M f)(x,y) = \grad_Y^{(x,\eps)} \Te_\mathscr{A}(f)(x,y) \\
\eps \Te_\mathscr{A}(\div_M F)(x,y) = \div_Y^{(x,\eps)} \Te_\mathscr{A}(F)(x,y)
\end{gather*}
hold in $M\times Y$.
\end{proposition}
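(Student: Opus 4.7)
The plan is to reduce both identities to a pointwise-in-$x$ calculation in a single chart. By Proposition~\ref{prop:RMcompatib}, the local operators $\Te_{\phi_\alpha}$ agree on overlaps of an UC-atlas, and the partition of unity weights $\pi_\alpha$ depend only on $x$, not on $y$. Hence for each $x\in M$ one may pick any chart $\phi_\alpha$ with $x\in U_\alpha$ and replace $\Te_\mathscr{A}$ by $\Te_{\phi_\alpha}$; if both identities hold in each chart they hold globally. So the entire argument may be carried out as if $M=U_\alpha$ and $\phi=\phi_\alpha$.

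Next I would establish the two workhorse identities that make the reduction possible. The first is the standard Euclidean fact $\grad_y \Te(h)(x,y)=\eps\,\Te(\grad h)(x,y)$, obtained by differentiating the defining formula $\Te(h)(x,y)=h(\eps[x/\eps]+\eps y)$ in $y$; pulled back via $\phi$, this gives
\[
\eps\,\Te_\mathscr{A}(\p_j f)(x,y) = \p_{y^j}\Te_\mathscr{A}(f)(x,y)
\]
for the coordinate partials $\p_j f$ on $M$. The second is that, because $\Te_\mathscr{A}$ acts on scalar functions by evaluation at the shifted point $\eps[\phi(x)/\eps]+\eps y$, it commutes with every pointwise algebraic operation on the metric coefficients. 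In particular
\[
\Te_\mathscr{A}(g^{ij})=\bigl[\Te_\mathscr{A}(g_{ij})\bigr]^{-1}=\bigl[g_Y^{(x,\eps)}\bigr]^{ij},\qquad \Te_\mathscr{A}(\sqrt{|G|})=\sqrt{\bigl|G_Y^{(x,\eps)}\bigr|}\,,
\]
by the very definition of $g_Y^{(x,\eps)}$.

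For the gradient, I would expand in local coordinates, $\grad_M f=\sum_{i,j}g^{ij}(\p_j f)\p_i$, apply $\Te_\mathscr{A}$ componentwise (vector fields in $U_\alpha$ unfold component by component against the local basis), multiply by $\eps$, and substitute the two identities above to get
\[
\eps\,\Te_\mathscr{A}(\grad_M f)=\sum_{i,j}\Te_\mathscr{A}(g^{ij})\,\eps\Te_\mathscr{A}(\p_j f)\,\p_{y^i}=\sum_{i,j}\bigl[g_Y^{(x,\eps)}\bigr]^{ij}\p_{y^j}\Te_\mathscr{A}(f)\,\p_{y^i}=\grad_Y^{(x,\eps)}\Te_\mathscr{A}(f).
\]
The divergence identity is handled in exactly the same spirit: expand $\div_M F=\tfrac{1}{\sqrt{|G|}}\sum_i\p_i(\sqrt{|G|}F^i)$, apply $\Te_\mathscr{A}$, pull the $\tfrac1\eps$ through the partial using the Euclidean identity on the scalar function $\sqrt{|G|}F^i$, and then invoke $\Te_\mathscr{A}(\sqrt{|G|})=\sqrt{|G_Y^{(x,\eps)}|}$ to recognize the result as $\div_Y^{(x,\eps)}\Te_\mathscr{A}(F)$.

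The main obstacle is not computational but book-keeping: ensuring that the local coordinate expressions glue consistently on chart overlaps so that the single-chart formula lifts to a global identity on $M\times Y$. This is exactly where the UC-criterion enters, via Proposition~\ref{prop:RMcompatib}: the differing charts $\phi_\alpha,\phi_\beta$ on $U_\alpha\cap U_\beta$ differ by an $\eps$-integer translation, so the unfolded quantities and the associated local metrics $g_Y^{(x,\eps)}$ coincide on the overlap, and the partition of unity then assembles the local identities into the global statement without introducing any $y$-derivatives of $\pi_\alpha$.
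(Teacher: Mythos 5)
Your proposal is correct. The paper only recalls Proposition~\ref{prop:RMgradient} from \cite{dobo_constrpumf} without reproducing a proof, but your argument --- reduction to a single chart via Proposition~\ref{prop:RMcompatib} together with the $y$-independence of the $\pi_\alpha$, the Euclidean identity $\p_{y^j}\Te(h)=\eps\,\Te(\p_j h)$, and the observation that the chartwise unfolding is a point evaluation and therefore commutes with the pointwise algebra of the metric coefficients, so that $\Te_\mathscr{A}(g^{ij})=\bigl[\Te_\mathscr{A}(g_{ij})\bigr]^{-1}$ and $\Te_\mathscr{A}(\sqrt{|\det G|})=\sqrt{|\det[\Te_\mathscr{A}(g_{ij})]|}$ --- is precisely the computation the statement rests on, and it matches the way the paper itself manipulates $\Te_{\phi_\alpha}$ of gradients in the proof of Lemma~\ref{lem:RMauxconvergence}.
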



\subsection{Compactness Results for Gradients}

One has the following compactness result for gradients:
\begin{theorem}
\label{thm:compactgrad}
Let $\{ w^\eps \} \subset H^1(M)$ be a sequence.
\begin{enumerate}
\item Assume that the estimate
$\nx{w^\eps}{L^2(M)} \leq C$ and 
$\eps\nx{\grad_M w^\eps}{L^2TM} \leq C$
holds with a constant $C$ independent of $\eps$. Then there exists a $w\in L^2(M;H^1_\#(Y))$ such that along a subsequence
\begin{align*}
\Te_\mathscr{A}(w^\eps) &\weak w \quad \text{in } L^2(M\times Y) \\
\eps \Te_\mathscr{A}(\grad_M w^\eps)& \weak \grad_Y^{(x)}w\quad \text{in } L^2(M; L^2TY).
\end{align*}

\item If the stronger estimate
$\nx{w^\eps}{L^2(M)} \leq C$ and
$\nx{\grad_M w^\eps}{L^2TM}  \leq C$ 
holds with a constant $C$ independent of $\eps$, then there exists a $w\in H^1(M)$ and a $\hat{w}\in L^2(M;H^1_\#(Y))$ such that along a subsequence
\begin{align*}
\Te_\mathscr{A}(w^\eps) &\longrightarrow w \quad \text{in } L^2(M\times Y) \\
\Te_\mathscr{A}(\grad_M w^\eps)& \weak (\grad_M w)_Y + \grad_Y^{(x)}\hat{w}\quad \text{in } L^2(M;L^2TY).
\end{align*}
\end{enumerate}
By abuse of notation we use $\grad_Y^{(x)}w$ to denote a function $(x,y) \mapsto \grad_Y^{(x)}w(x,y)$. The operator $(\cdot)_Y$ is defined below.
\end{theorem}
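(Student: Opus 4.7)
The plan is to adapt the classical two-scale compactness theorems of Nguetseng, Allaire, and Cioranescu--Damlamian--Griso to the present manifold setting, using the identities of Propositions~\ref{prop:RMcont}--\ref{prop:RMgradient} to reduce everything to analysis on $M\times Y$, and handling as a new difficulty the $\eps$-dependence of the fibre metric $g_Y^{(x,\eps)}$ (whose limit is $g_Y^{(x)}$).

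For part~(1), Proposition~\ref{prop:RMcont} together with the $L^2$-bound on $w^\eps$ gives that $\Te_\mathscr{A}(w^\eps)$ is bounded in $L^2(M\times Y)$, so a subsequence converges weakly to some $w$. Proposition~\ref{prop:RMgradient} rewrites the gradient hypothesis as $\nx{\grad_Y^{(x,\eps)}\Te_\mathscr{A}(w^\eps)}{L^2(M;L^2TY)}\leq C$. The coefficients $\Te_\mathscr{A}(g_{ij})$ of $g_Y^{(x,\eps)}$ converge pointwise to $g_{ij}(x)$ and remain uniformly elliptic for small $\eps$, so the plain partial derivatives $\p_{y^i}\Te_\mathscr{A}(w^\eps)$ are uniformly bounded in $L^2(M\times Y)$ as well, and up to a further subsequence converge weakly to some $\xi_i$. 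Testing against smooth compactly-supported test functions in $Y$ and integrating by parts identifies $\xi_i=\p_{y^i} w$, so $w(x,\cdot)\in H^1(Y)$ for a.e.\ $x$; combining the limits of the coefficients and the derivatives yields $\grad_Y^{(x)} w$ as the weak limit of $\eps\Te_\mathscr{A}(\grad_M w^\eps)$. The $Y$-periodicity of $w$ in $y$ holds on each chart because $\Te((\phi_\alpha)_* w^\eps)$ is $Y$-periodic by construction, and globally because Proposition~\ref{prop:RMcompatib} guarantees compatible gluing across chart overlaps.

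For part~(2), the stronger hypothesis forces $\eps\nx{\grad_M w^\eps}{L^2TM}\to 0$, so part~(1) applied with a vanishing upper bound gives $\grad_Y^{(x)} w=0$; hence $w$ is $y$-independent and identifies with an element of $L^2(M)$. Rellich compactness on the compact manifold $M$ applied to the bounded $H^1$-sequence $\{w^\eps\}$ yields $w^\eps\to \tilde w$ strongly in $L^2(M)$ with $\tilde w\in H^1(M)$. Writing $\Te_\mathscr{A}(w^\eps)=\Te_\mathscr{A}(w^\eps-\tilde w)+\Te_\mathscr{A}(\tilde w)$, continuity of $\Te_\mathscr{A}$ (Proposition~\ref{prop:RMcont}) together with the (UCM) criterion upgrades this to $\Te_\mathscr{A}(w^\eps)\to \tilde w$ strongly in $L^2(M\times Y)$, and by uniqueness of the weak limit $w=\tilde w\in H^1(M)$. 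For the gradient decomposition, $\Te_\mathscr{A}(\grad_M w^\eps)$ is bounded in $L^2(M;L^2TY)$ by Proposition~\ref{prop:RMcont}, hence has a weak subsequential limit $\zeta$. Testing against $\Psi(x,y)$ with $\Psi(x,\cdot)$ periodic and $\div_Y^{(x)}\Psi(x,\cdot)=0$, together with a standard integration by parts in $y$ using Proposition~\ref{prop:RMgradient}, identifies the $y$-mean of $\zeta$ with $(\grad_M w)_Y$, while the mean-free part is curl-free in $y$ with respect to $g_Y^{(x)}$; the Helmholtz decomposition on the torus $Y$ performed fibrewise then produces $\hat w\in L^2(M;H^1_\#(Y))$ satisfying $\zeta-(\grad_M w)_Y=\grad_Y^{(x)}\hat w$.

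The main obstacle, absent in the flat Euclidean theory, is the $\eps$-dependence of the fibre metric: one must argue that weak limits taken against $g_Y^{(x,\eps)}$ coincide with those against $g_Y^{(x)}$, that the fibrewise Helmholtz decomposition uses the correct limit metric, and that the resulting potential $\hat w$ depends measurably on $x$. These points rest on the uniform ellipticity of the family $\{g_Y^{(x)}\}_{x\in M}$, inherited from smoothness of $g_M$ and compactness of $M$, together with the convergence $\Te_\mathscr{A}(g_{ij})\to g_{ij}(x)$, with errors localized to a thin boundary layer where the unfolding operator is inexact and controlled via Proposition~\ref{prop:RMcont}.
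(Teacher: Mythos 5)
A preliminary remark: the paper does not actually prove Theorem \ref{thm:compactgrad}; the entire statement is recalled from the cited references on periodic unfolding on manifolds, so there is no in-paper proof to compare your argument against. Judged on its own terms, your outline follows the route any proof of this result must take --- the Allaire/Cioranescu--Damlamian--Griso two-scale compactness argument transported to $M\times Y$ via Propositions \ref{prop:RMcont} and \ref{prop:RMgradient}, with extra bookkeeping for the $\eps$-dependent fibre metric --- and the overall architecture (weak limits from boundedness, identification of the limit of $\grad_Y^{(x,\eps)}\Te_\mathscr{A}(w^\eps)$, Rellich plus strong convergence of the unfolding in part (2), fibrewise Helmholtz decomposition on the torus) is the right one.

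Two steps are asserted where they need proof. First, in part (1) the claim that $w(x,\cdot)$ is $Y$-periodic ``because $\Te((\phi_\alpha)_*w^\eps)$ is $Y$-periodic by construction'' is not correct as stated: the unfolded function is defined on $M\times Y$ with $Y=[0,1)^n$ and is not periodic in $y$ for fixed $x$. Periodicity of the limit is itself a theorem, obtained from the translation identity $\Te(f)(x,y+e_i)=\Te(f)(x+\eps e_i,y)$ together with the gradient bound, by showing that the traces of $w$ on opposite faces of $Y$ coincide; this is the only genuinely nontrivial point of part (1) and it is exactly the step your argument skips. Second, in part (2) the integration by parts against $y$-divergence-free test fields is where the manifold structure actually bites: $\div_M$ carries the $x$-dependent density $\sqrt{|G|}$, so the splitting of the divergence of an oscillating test field into an $O(1)$ part and an $\eps^{-1}\,\div_Y^{(x,\eps)}$ part --- and the claim that the latter is annihilated by choosing $\Psi$ divergence-free with respect to the \emph{limit} metric $g_Y^{(x)}$ rather than $g_Y^{(x,\eps)}$ --- requires a quantitative error estimate, not merely the pointwise convergence $\Te_\mathscr{A}(g_{ij})\to g_{ij}$. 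You correctly identify this in your closing paragraph as ``the main obstacle'', but you defer it rather than resolve it. As written, the proposal is a correct plan with these two open steps rather than a complete proof.
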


\begin{definition}
\begin{enumerate}
\item For a vector field $F \in \mathfrak{X}(M)$ we define a \emph{transport operator} $(\cdot)_Y$ with
\begin{align*}
(\cdot)_Y:  \mathfrak{X}(M) &\longrightarrow \mathfrak{X}(Y)^M \\
F &\longmapsto F_Y,
\end{align*}
where for $F= \sum_i F^i \frac{\p}{\p x^i}$ the field $F_Y$ is defined via
$F_Y(x,y)= \sum_i F^i(x) \frac{\p}{\p y^i}$.
\item Analogously, we construct a transport operator which maps vector fields on $Y$ to vector fields on $M$:
For a vector field $G \in \mathfrak{X}(Y)$ we define a \emph{transport operator} $(\cdot)_M$ with
\begin{align*}
(\cdot)_M:  \mathfrak{X}(Y) &\longrightarrow \mathfrak{X}(M)^Y \\
G &\longmapsto G_M,
\end{align*}
where for $G= \sum_i G^i \frac{\p}{\p y^i}$ the field $G_M$ is defined via
$G_M(x,y)= \sum_i G^i(y) \frac{\p}{\p x^i}$.
\end{enumerate}
\end{definition}
One easily shows that for vector fields $V_i \in \mathfrak{X}(M)^Y$, $W_i\in \mathfrak{X}(Y)^M$, $i=1,2$, the relations
\begin{itemize}
\item $\Bigl( (V_1)_Y\Bigr)_M=V_1$, $\Bigl( (W_1)_M\Bigr)_Y=W_1$
\item $g_Y^{(x)}(W_1, W_2) = g_M\Bigl( (W_1)_M, (W_2)_M  \Bigr)$
\item $g_M( V_1,V_2 ) = g_Y^{(x)}\Bigl(  (V_1)_Y, (V_2)_Y  \Bigr)$
\end{itemize}
hold.

\section{Application to an Elliptic Model Problem}
\label{sec:applell}

Let $D\in \C_\#(Y)$ be a fixed periodic function such that $0< d_0 \leq D \leq D_0$ for some positive constants $d_0,D_0$. For $\eps>0$ we define the function $D^\eps  :M \longrightarrow \R$ via $D^\eps(x) := D \left(  \left\{  \frac{\phi_\alpha(x)}{\eps} \right\}\right)$ for  $x\in U_\alpha$. Due to the UC-condition we obtain $D \left(  \left\{  \frac{\phi_\alpha(x)}{\eps} \right\}\right) = D \left(  \left\{  \frac{\phi_\beta(x)}{\eps} \right\}\right)$ for $x\in U_\alpha \cap U_\beta$, thus the function $D^\eps$ is well-defined. $D^\eps$ might be interpreted as heat conductivity or diffusivity of $M$ for fixed $\eps >0$.

 Let $c\geq 0$ be a constant and let $f\in L^2(M)$ be a source term. We are considering the problem: Find $\ue \in H^1_0(M)$ with
\begin{subequations}
\label{eq:RMmainstr}
\begin{gather}
-\div_M(\De \grad_M \ue) + c \ue = f \quad \text{in } M \label{eq:RMmainstr1}\\
\ue=0 \quad \text{on } \p M. \label{eq:RMmainstr2}
\end{gather}
\end{subequations}
The weak formulation of this problem reads
\begin{gather}
\int_M^{\phantom{g }} \De g_M(\grad_M \ue ,\grad_M \varphi) \udvol + \int_M c \ue \varphi \udvol = \int_M f \varphi \udvol \quad \forall \varphi \in H^1_0(M). \label{eq:RMweak}
\end{gather}
Formally, the weak formulation is obtained by multiplication of equation \eqref{eq:RMmainstr1} with a test function $\varphi$ and subsequent integration by parts, taking into account the boundary condition \eqref{eq:RMmainstr2}. Existence of a solution for fixed $\eps>0$ is obtained easily by using the Lax-Milgram lemma.

We are going to show the following theorem:
\begin{theorem}
\label{thm:RMhomogeq}
There exists a unique $u\in H^1_0(M)$ such that $\ue\weak u$ in $H^1_0(M)$. The limit satisfies the homogenized equation
\begin{equation}
\begin{gathered}
-\div_M( B \grad_M u  ) + cu =f  \quad \text{in }M\\
u=0 \quad \text{on } \p M,
\end{gathered} \label{eq:RMhomogprob}
\end{equation}
where the linear operator $B$ is constructed with the help of the following parameter-dependent cell problem: For fixed $x\in M$ and $i=1,\dots,n$, find $w_i(x) \in H^1_\#(Y) / \R$, solution of
 \begin{gather*}
-\div_Y^{(x)}( D(\cdot)\grad_Y^{(x)} w_i(x)  ) = \div_Y^{(x)}(D(\cdot) \frac{\p}{\p y^i}) \quad \text{in } Y \\
y \longmapsto w_i(x,y) \quad \text{is $Y$-periodic}.
\end{gather*}
Then define a tensor $A$ as 
$A^k_i(x,y):= \sum_j g^{kj}(x) \frac{\p w_i}{\p y^j}(x,y)$,
and the linear operator B as 
\begin{gather*}
B^k_i(x):= \int_Y^{\phantom{g}} D(y)(\delta^k_i +  A^k_i (x,y)) \ud y.
\end{gather*}
Moreover, the corresponding tensor $\tilde{B}$ with lowered index, i.e.\ $\tilde{B}_{ki}:= \sum_j g_{kj}B^j_i$ is symmetric and positive definite.
\end{theorem}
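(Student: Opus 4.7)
The plan is to follow the classical periodic unfolding argument, carefully tracking the Riemannian metric through every step. First I would derive uniform a priori bounds: testing the weak formulation \eqref{eq:RMweak} with $\varphi = u^\eps$ and using $D^\eps \geq d_0 > 0$ together with Poincaré's inequality on $M$ gives $\|u^\eps\|_{H^1_0(M)} \leq C$ independent of $\eps$. Theorem \ref{thm:compactgrad}(2) then yields, along a subsequence, limits $u \in H^1_0(M)$ and $\hat{u} \in L^2(M; H^1_\#(Y))$ with
\[
\Te_\mathscr{A}(u^\eps) \to u, \qquad \Te_\mathscr{A}(\grad_M u^\eps) \weak (\grad_M u)_Y + \grad_Y^{(x)} \hat{u}.
\]

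Next I would pass to the limit in \eqref{eq:RMweak}. Choose test functions of the form $\varphi(x) + \eps \psi(x) \theta_\eps(x)$, where $\varphi \in C^\infty_0(M)$, $\psi \in C^\infty_0(M)$, and $\theta_\eps$ is constructed chart-by-chart so that $\Te_\mathscr{A}(\theta_\eps) \to \theta$ for a chosen $\theta \in C^\infty_\#(Y)$ (the UC-criterion guarantees this is globally well-defined). Applying the unfolding operator to both sides of \eqref{eq:RMweak}, using the UCM property of $D^\eps g_M(\grad_M u^\eps, \grad_M\varphi)$, Proposition \ref{prop:RMrm} to rewrite $g_M$ in terms of $g_Y^{(x,\eps)}$, Proposition \ref{prop:RMgradient} to handle the factor $\eps$ absorbed into the oscillating test function, and letting $\eps \to 0$ (with $\Te_\mathscr{A}(D^\eps) \to D$ and $g_Y^{(x,\eps)} \to g_Y^{(x)}$), I arrive at the two-scale limit identity
\[
\frac{1}{|Y|}\int_{M\times Y} D(y)\, g_Y^{(x)}\!\bigl((\grad_M u)_Y + \grad_Y^{(x)} \hat{u},\, (\grad_M \varphi)_Y + \grad_Y^{(x)}(\psi\theta)\bigr)\sqrt{|G|}\, \d y\, \d x + c\!\int_M u\varphi\udvol = \int_M f\varphi\udvol,
\]
for all admissible $\varphi, \psi, \theta$.

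Setting $\varphi = 0$ and varying $\psi\theta$ decouples the $y$-dependence and yields, for a.e.\ fixed $x \in M$, the cell problem
\[
-\div_Y^{(x)}\!\bigl(D(y)\bigl[(\grad_M u(x))_Y + \grad_Y^{(x)} \hat{u}(x,\cdot)\bigr]\bigr) = 0 \quad \text{in } Y,
\]
with $Y$-periodicity. Writing $\grad_M u = \sum_i (\p u/\p x^i)\,\p/\p x^i$ in a local chart and exploiting linearity gives $\hat{u}(x,y) = \sum_i (\p u/\p x^i)(x)\, w_i(x,y)$ with $w_i(x,\cdot)$ solving the cell problem stated in the theorem. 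Substituting back and choosing $\psi\theta = 0$ produces the macroscopic identity
\[
\int_M g_M(B \grad_M u, \grad_M \varphi)\udvol + \int_M c u\varphi\udvol = \int_M f\varphi\udvol,
\]
with $B^k_i$ given by the asserted formula after a direct calculation of $g_Y^{(x)}(\p/\p y^i + \grad_Y^{(x)} w_i, \p/\p y^k)$ using $g^{kj}$ to raise indices.

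Finally, symmetry and positive definiteness of $\tilde{B}_{ki}$ follow by writing the quadratic form associated to $B$ as
\[
\sum_{k,i} \tilde{B}_{ki} \xi^i \xi^k = \int_Y D(y)\, g_Y^{(x)}\!\bigl(\xi + \grad_Y^{(x)} w_\xi, \, \xi + \grad_Y^{(x)} w_\xi\bigr)\d y, \qquad \xi = \sum_i \xi^i \tfrac{\p}{\p y^i},
\]
with $w_\xi = \sum_i \xi^i w_i$, and observing that this form is manifestly symmetric in $\xi$ by the symmetry of $g_Y^{(x)}$ and coercive by $D \geq d_0$ combined with the Poincaré inequality on $H^1_\#(Y)/\R$. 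Uniqueness for \eqref{eq:RMhomogprob} (Lax–Milgram applied to the coercive symmetric form just identified) then upgrades subsequential to full convergence. The main obstacle is the chart-dependent bookkeeping: one must verify that the local-coordinate decomposition $\hat{u} = \sum_i (\p u/\p x^i)\, w_i$, the cell problem, and the tensor $B$ are invariantly defined across overlapping charts — but this is exactly what the UC-criterion and Proposition \ref{prop:RMcompatib} ensure, so the limits of all the unfolded quantities glue consistently.
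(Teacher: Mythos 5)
Your route coincides with the paper's in all essentials: a priori bounds and Theorem \ref{thm:compactgrad}, oscillating test functions $\varphi_1+\eps\varphi_2$, unfolding via the UCM criterion together with Propositions \ref{prop:RMrm} and \ref{prop:RMgradient} to reach the two-scale identity, decoupling into cell and macroscopic problems, and Lax--Milgram for uniqueness and full-sequence convergence. One bookkeeping error, though: you write $\grad_M u=\sum_i(\p u/\p x^i)\,\p/\p x^i$ and accordingly $\hat{u}=\sum_i(\p u/\p x^i)\,w_i$. On a Riemannian manifold the coordinate components of the gradient are $(\grad_M u)^i=\sum_j g^{ij}\,\p u/\p x^j$, not $\p u/\p x^i$; since the source term of the $\hat{u}$-equation is $\div_Y^{(x)}\bigl(D\,(\grad_M u)_Y\bigr)$ with $(\grad_M u)_Y=\sum_i(\grad_M u)^i\,\p/\p y^i$, superposition gives $\hat{u}=\sum_i(\grad_M u)^i\,w_i$. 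With your coefficients the substitution into the macroscopic identity does not reproduce the stated tensor $A^k_i=\sum_j g^{kj}\,\p w_i/\p y^j$ except when $g_{ij}=\delta_{ij}$ --- and this is precisely the point where the Riemannian structure enters, so it must be tracked correctly.

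The more serious issue is positive definiteness. The identity $\sum_{k,i}\tilde{B}_{ki}\xi^i\xi^k=\int_Y D\,g_Y^{(x)}\bigl(\xi+\grad_Y^{(x)}w_\xi,\;\xi+\grad_Y^{(x)}w_\xi\bigr)\ud y$ is not ``manifest'': to pass from the defining expression for $\tilde{B}$ (which pairs $\delta^i_\alpha+(\grad_Y^{(x)}w_\alpha)^i$ only against $\delta^j_\beta$) to the symmetric square, you must use $w_\beta$ as a test function in the weak cell problem for $w_\alpha$, exactly as the paper does in Step 3; this should be stated, since it is also what establishes symmetry. Granting that identity, $D\geq d_0$ gives only nonnegativity, and the Poincar\'e inequality on $H^1_\#(Y)/\R$ bounds $w_\xi$ by its gradient --- it says nothing about whether $\xi+\grad_Y^{(x)}w_\xi$ can vanish identically in $y$ for some $\xi\neq0$, which is what definiteness requires you to exclude. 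The paper excludes it by contradiction: if $\xi+\grad_Y^{(x)}w_\xi=0$ then $w_\xi$ differs by a constant from an affine, non-constant function of $y$ (built from the auxiliary functions $\eta^i$ and the invertibility of $G(x)$), which cannot be $Y$-periodic. A shorter alternative is available --- since $w_\xi$ is $Y$-periodic and $g^{kj}(x)$ is constant in $y$, $\int_Y\grad_Y^{(x)}w_\xi\ud y=0$, so the cross term drops and the quadratic form is at least $d_0\,|Y|\,g_Y^{(x)}(\xi,\xi)>0$ --- but some such argument is needed where you currently have only an appeal to Poincar\'e.
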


For the special case of $M$ being a domain in $\R^n$ and $g$ being the Euclidean metric $g_{ij}=\delta_{ij}$, one obtains the operators $\div_M = \div$ (the usual divergence) and $\div^{(x,\eps)}_Y=\div^{(x)}_Y= \div_y$ (the divergence taken with respect to the second variable)  as well as $\grad_M=\grad$ (the usual gradient)  and $\grad_Y^{(x,\eps)}  = \grad_Y^{(x)} = \grad_y$ (the gradient taken with respect to the second variable). Keeping in mind the indentification $\frac{\p}{\p y^i}=e_i$ (the $i$-th unit vecor), one sees that the theorem above generalizes the well-known homogenization results for a strongly elliptic equation, see e.g.\ \cite{cido_intrhom}.

\subsection{A-priori Estimates and Limits}
\label{sec:RMestimlim}

Using $\ue$ as a test function in the weak formulation, one shows that the estimate $\nx{\ue}{H^1(M)} \leq C$ holds with a constant $C>0$ independent of $\eps$.

Theorem \ref{thm:compactgrad} and the usual compactness results and embeddings (see e.g.\ \cite{nonlinanamanif}) now show that there exits a $u\in H^1(M)$ and a $\hat{u}\in L^2(M;H^1_\#(Y))$ such that along a subsequence
\begin{subequations}
\label{eq:convergence}
\begin{align}
\ue& \weak u \quad \text{in } H^1(M) \\
\ue &\longrightarrow u \quad \text{in } L^2(M) \\
\Te_\mathscr{A}(u^\eps) &\longrightarrow u \quad \text{in } L^2(M\times Y) \\
\Te_\mathscr{A}(\grad_M u^\eps)& \weak (\grad_M u)_Y + \grad_Y^{(x)}\hat{u}\quad \text{in } L^2(M;L^2TY).
\end{align}
\end{subequations}
Due to the compactness of the trace map $H^1(M) \hookrightarrow L^2(\p M)$ we have $u|_{\p M} = \lim_{\eps \rightarrow 0} (\ue|_{\p M}) =0$


\subsection{The Two-Scale Limit Problem}

In order to derive the limit problem, we choose two test functions $\varphi_1 \in \C^\infty_0(M)$ and $\varphi_2\in \C^\infty_0(M;\C^\infty_\#(Y))$ and define
\[
\varphi^\eps(x):= \varphi_1(x) + \eps \varphi_2 \left(x,\left\{ \frac{\phi_\alpha(x)}{\eps} \right\} \right) \quad \text{for } x\in U_\alpha.
\]
We need the following auxiliary result:
\begin{lemma}
\label{lem:RMauxconvergence}
We have
\begin{enumerate}
 \item $\Te_\mathscr{A}(\De)(x,y)=D(y)$
 \item $\Te_\mathscr{A}( \grad_M \varphi^\eps ) \longrightarrow (\grad_M \varphi_1)_Y + \grad_Y^{(x)} \varphi_2$ in $L^\infty(M\times Y)$
 \item $\Te_\mathscr{A}( \varphi^\eps ) \longrightarrow\varphi_1$ in $L^\infty(M\times Y)$
\end{enumerate}
\end{lemma}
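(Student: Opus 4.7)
The plan is to dispatch part (1) by direct computation from the definition, and then handle (2) and (3) by pushing $\varphi^\eps$ through a chart, unfolding in $\R^n$, and taking the pointwise limit, with uniformity coming from smoothness and compact support.

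For (1), I pick an arbitrary chart $\phi_\alpha\in\mathscr{A}$ and unwind Definition \ref{def:RMoperator}. Since $(\phi_\alpha)_* \De(z)= D(\{z/\eps\})$ on $\phi_\alpha(U_\alpha)$, the flat unfolding gives $\Te((\phi_\alpha)_*\De)(z,y)= D(\{[z/\eps]+y\})= D(y)$ using $[z/\eps]\in\Z^n$, $y\in Y$ and $Y$-periodicity of $D$. Pulling back via $(\phi_\alpha\times\Id)^*$ leaves the right-hand side unchanged because it no longer depends on $z$. Proposition \ref{prop:RMcompatib} then guarantees that this local identity agrees on overlaps, and $\sum_\alpha \pi_\alpha = 1$ yields $\Te_\mathscr{A}(\De)(x,y)=D(y)$ globally.

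For (3), I fix $x\in U_\alpha$ and write $\varphi^\eps = \varphi_1 + \eps\psi^\eps$ where $\psi^\eps(x)=\varphi_2(x,\{\phi_\alpha(x)/\eps\})$. Exactly the same computation as in (1), applied to the $y$-slot of $\varphi_2$, shows $\Te_{\phi_\alpha}(\psi^\eps)(x,y)=\tilde\varphi_2(\eps[\phi_\alpha(x)/\eps]+\eps y,\,y)$ where $\tilde\varphi_2 = ((\phi_\alpha)_*\otimes\Id)\varphi_2$, and similarly $\Te_{\phi_\alpha}(\varphi_1)(x,y)=\tilde\varphi_1(\eps[\phi_\alpha(x)/\eps]+\eps y)$. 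As $\eps\to 0$, $\eps[\phi_\alpha(x)/\eps]+\eps y\to\phi_\alpha(x)$ uniformly in $(x,y)$, so $\Te_{\phi_\alpha}(\varphi_1)\to\varphi_1$ and $\eps\Te_{\phi_\alpha}(\psi^\eps)\to 0$, both in $L^\infty$, by the continuity of $\tilde\varphi_1$ and the uniform bound on $\tilde\varphi_2$. Summing against the partition of unity and invoking Proposition \ref{prop:RMcompatib} on overlaps produces the global $L^\infty$ convergence to $\varphi_1$.

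For (2), I invoke Proposition \ref{prop:RMgradient} to get $\Te_\mathscr{A}(\grad_M\varphi^\eps)=\eps^{-1}\grad_Y^{(x,\eps)}\Te_\mathscr{A}(\varphi^\eps)$, so the question reduces to computing $\eps^{-1}\grad_Y^{(x,\eps)}$ applied to the explicit expression for $\Te_\mathscr{A}(\varphi^\eps)$ derived in (3). Differentiating in $y^j$ pulls out factors of $\eps$ from the first slot of both $\tilde\varphi_1$ and $\tilde\varphi_2$ and from the $\eps$ prefactor on $\varphi_2$, while the $y$-slot of $\tilde\varphi_2$ contributes $\eps(\p_{y^j}\tilde\varphi_2)$; all terms thus carry exactly one factor of $\eps$, which cancels against the $\eps^{-1}$. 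Passing to the limit, the metric coefficients $[g_Y^{(x,\eps)}]^{kj}(x,y)$ converge (by the defining relation $\Te_\mathscr{A}(g_M)(x,\cdot)\to g^{(x)}$) to $g^{kj}(x)$, the arguments $\eps[\phi_\alpha(x)/\eps]+\eps y$ tend to $\phi_\alpha(x)$, and the $\eps(\p_{x^j}\tilde\varphi_2)$ term vanishes. The limit reads
\[
\sum_{k,j} g^{kj}(x)\bigl[(\p_j\varphi_1)(x) + (\p_{y^j}\varphi_2)(x,y)\bigr]\tfrac{\p}{\p y^k},
\]
which is exactly $(\grad_M\varphi_1)_Y+\grad_Y^{(x)}\varphi_2$ by the definitions of the transport operator $(\cdot)_Y$ and of $\grad_Y^{(x)}$.

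The main obstacle I expect is bookkeeping rather than analysis: one has to (a) verify that the chain rule applied to $\{\phi_\alpha(x)/\eps\}$ produces the correct factor of $1/\eps$ despite the discontinuity of $[\cdot]$ (this is legal because the jump set has measure zero and the composed function is $W^{1,\infty}$), (b) confirm that the $\eps$-dependent metric $g_Y^{(x,\eps)}$ converges to $g_Y^{(x)}$ uniformly on $M\times Y$, which follows from smoothness of $g_{ij}$ and the continuity of $\Te_\mathscr{A}$ on $L^\infty$, and (c) check that the chart-dependent expressions agree on overlaps, which is delivered by the UC-criterion via Proposition \ref{prop:RMcompatib}. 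Uniform ($L^\infty$) convergence throughout is inherited from the smoothness of $\varphi_1$, $\varphi_2$, $g_M$ and the compactness of $M$.
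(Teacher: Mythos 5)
Your proposal is correct and follows essentially the same route as the paper: part (1) by direct computation from the definition of the unfolding operator and the periodicity of $D$, parts (2) and (3) via Proposition \ref{prop:RMgradient} together with the uniform convergence $\eps\left[\frac{\phi_\alpha(x)}{\eps}\right]+\eps y\to\phi_\alpha(x)$ and the continuity of $\varphi_1$, $\varphi_2$ and the metric coefficients, glued globally by the partition of unity and Proposition \ref{prop:RMcompatib}. The only (harmless) difference is bookkeeping in part (2) --- you unfold $\varphi^\eps$ first and then apply $\eps^{-1}\grad_Y^{(x,\eps)}$, whereas the paper splits $\grad_M\varphi^\eps$ into the $\varphi_1$- and $\varphi_2$-contributions before unfolding; your version also makes explicit the $O(\eps)$ cross term from the first slot of $\varphi_2$, which the paper passes over silently.
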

\begin{proof}
By the very definition of pullback and pushforward, we obtain for $\Te_{\phi_\alpha}$, $\alpha \in I$
\[
\Te_{\phi_\alpha}(\De)(x,y) = D\left( \left\{ \frac{\eps \left[ \frac{\phi_\alpha(x)}{\eps} \right] + \eps y  }{\eps}  \right\}\right) = D(y).
\]
Next, we have due to the unfolding rules for gradients (see Proposition \ref{prop:RMgradient})
\[
\Te_\mathscr{A}( \grad_M \varphi^\eps ) = \Te_\mathscr{A}(\grad_M \varphi_1) + \grad_Y^{(x,\eps)} \Te_\mathscr{A}(\varphi_2).
\]
For the first term on the right hand side we obtain for $x\in U_\alpha$ the convergence
\begin{align*}
\Te_{\phi_\alpha}(\grad_M \varphi_1)(x,y)& = \Te_{\phi_\alpha}(\sum_{i,j} g^{ij} \frac{\p \varphi_1}{\p x^i} \frac{\p}{\p x^j}  )(x,y) \\
&= \sum_{i,j} \underbrace{\Te_{\phi_\alpha}(g^{ij})(x,y)}_{\rightarrow g^{ij}(x)} \underbrace{\Te_{\phi_\alpha}(\frac{\p \varphi_1}{\p x^i})(x,y)}_{\rightarrow \frac{\p \varphi_1}{\p x^i}} \frac{\p}{\p y^j} \\
& \longrightarrow \sum_{i,j} g^{ij}(x) \frac{\p \varphi_1}{\p x^i} \frac{\p }{\p y ^j} = (\grad_M \varphi_1)_Y
\end{align*}
in $\C(M\times Y)$. For the second term, we have as above 
\begin{align*}
\grad_Y^{(x,\eps)} \Te_{\phi_\alpha}(\varphi_2)(x,y) &= \grad_Y^{(x,\eps)} \varphi_2 (   \phi_\alpha^{-1}( \eps \left[ \frac{\phi_\alpha(x)}{\eps}\right]   + \eps y ), y   ) \\
&=\sum_{i,j} \Te_{\phi_\alpha}(g^{ij})(x,y) \frac{\p \varphi_2 }{\p y^i}(   \phi_\alpha^{-1}( \eps \left[ \frac{\phi_\alpha(x)}{\eps}\right]   + \eps y ), y   ) \frac{\p}{\p y^j},
\end{align*}
where $\frac{\p \varphi_2 }{\p y^i}$ has to be understood as derivative with respect to the second variable. Since $\Te_{\phi_\alpha}(g^{ij}) \rightarrow g^{ij}$ as well as 
\begin{equation}
\phi_\alpha^{-1}( \eps \left[ \frac{\phi_\alpha(x)}{\eps}\right]   + \eps y)\longrightarrow \phi_\alpha^{-1}( \phi_\alpha(x) )=x \label{eq:RMconvchart}
\end{equation}
(due to the continuity of $\phi_\alpha$), we obtain by using the continuity of $\frac{\p \varphi_2 }{\p y^i}$ that 
\begin{align*}
\grad_Y^{(x,\eps)} \Te_{\phi_\alpha}(\varphi_2)(x,y) & \longrightarrow \sum_{i,j} g^{ij}(x) \frac{\p \varphi_2}{\p y^i}(x,y)\frac{\p}{\p y^j} = \grad_Y^{(x)} \varphi_2(x,y).
\end{align*}
The last assertion follows along the same lines by using the boundedness of $\varphi_2$ as well as \eqref{eq:RMconvchart}.
\qquad\end{proof}

We choose $\varphi=\varphi^\eps$ as a test function in the weak formulation \eqref{eq:RMweak}. Since all the terms appearing under the integrals in \eqref{eq:RMweak} are bounded in $L^1(M)$ independent of $\eps$, these terms satisfy the UCM-criterion, and we can unfold the integral identity with respect to~$\simeq$. 
We obtain the expression 
\begin{align*}
\frac{1}{|Y|} \int_{M\times Y}^{\phantom{g}} & D(y) g_Y^{(x,\eps)}\Bigl(  \Te_\mathscr{A}(\grad_M \ue) , \Te_\mathscr{A}(\grad_M \varphi^\eps) \Bigr)(x,y)\,   \Te_\mathscr{A}(\sqrt{|G|})(x,y)\ud y \ud x \\
&+ \frac{1}{|Y|} \int_{M\times Y}  c \Te_\mathscr{A}(\ue)(x,y) \Te_\mathscr{A}(\varphi^\eps)(x,y) \,   \Te_\mathscr{A}(\sqrt{|G|})(x,y)\ud y \ud x \\
&= \frac{1}{|Y|} \int_{M\times Y}  \Te_\mathscr{A}(f)(x,y)  \Te_\mathscr{A}(\varphi^\eps)(x,y)\,   \Te_\mathscr{A}(\sqrt{|G|})(x,y)\ud y \ud x  + r(\eps)
\end{align*}
with $r(\eps)\rightarrow 0$ as $\eps \rightarrow 0$. Taking the limit on both sides, keeping in mind the convergences \eqref{eq:convergence}, one obtains the \emph{two-scale formulation of the limit problem}
\begin{equation}
\begin{gathered}
\frac{1}{|Y|} \int_{M\times Y}^{\phantom{g}}  D(y) g_Y^{(x)}\Bigl(  (\grad_M u)_Y + \grad_Y^{(x)}\hat{u} , (\grad_M \varphi_1)_Y + \grad_Y^{(x)}\varphi_2 \Bigr)  \ud y \udvol \\
+ \int_{M}  c u \varphi_1\ud y \udvol = \int_{M}  f \varphi_1\ud y \udvol .
\end{gathered} \label{eq:RMweaklim}
\end{equation}
By density of test functions, this holds for all $(\varphi_1, \varphi_2) \in H^1_0(M)\times L^2(M;H^1_\#(Y))$. The strong formulation of this problem is given in Theorem \ref{thm:RMhomogeq}.

\subsection{Proof of Theorem \ref{thm:RMhomogeq}}

\textbf{Step 1} The cell problem:\\
We start with the weak formulation \eqref{eq:RMweaklim}: Choosing $\varphi_1=0$, one obtains $\frac{1}{|Y|} \int_{M\times Y}  D(y) g_Y^{(x)}\Bigl(  (\grad_M u)_Y + \grad_Y^{(x)}\hat{u} , \grad_Y^{(x)}\varphi_2 \Bigr)  \ud y \udvol =0$. Upon integration by parts, this yields
\[
- \int_{M\times Y} \div_Y^{(x)} \Bigl( D \bigl[ (\grad_M u)_Y + \grad_Y^{(x)}\hat{u}   \bigr]  \Bigr) \varphi_2 \ud y \udvol = 0 \qquad \forall \varphi_2 \in L^2(M;H^1_\#(Y)),
\]
the strong form of which is given by: For fixed $x\in M$, find $\hat{u}(x) \in H^1_\#(Y)/\R$ such that
\begin{equation}
\begin{split}
-\div_Y^{(x)}(  D(\cdot)  \grad_Y^{(x)}\hat{u}(x)(y))& = \div_Y^{(x)}(D(\cdot)  (\grad_M u)_Y(x,y)    ) \quad \text{in }M \\
y&\longmapsto \hat{u}(x)(y) \quad \text{is $Y$-periodic}.
\end{split} \label{eq:RMuhat}
\end{equation}
To "factor out" the term $(\grad_M u)_Y$, we construct a solution of the cell problem for $i=1,\dots,n$, given by: Find a solution $w_i(x)\in H^1_\#(Y)/ \R$ of
\begin{subequations}
\label{eq:cell}
\begin{gather}
-\div_Y^{(x)}( D(y)\grad_Y^{(x)} w_i(x)(y)  ) = \div_Y^{(x)}(D(y) \frac{\p}{\p y^i}) \quad \text{in } Y \\
y \longmapsto w^i(x,y) \quad \text{is $Y$-periodic}.
\end{gather}
\end{subequations}
The weak formulation of this problem
\begin{equation}
\int_Y D(y) g_Y^{(x)}\Bigl( \grad_Y^{(x)} w_i(x), \grad_Y^{(x)} \varphi   \Bigr) \ud y = - \int_Y D(y) g_Y^{(x)}\Bigl( \frac{\p}{\p y^i}, \grad_y^{(x)} \varphi \Bigr) \ud y \quad \forall \varphi\in H^1_\#(Y)/ \R \label{eq:RMcellweak}
\end{equation}
is well defined in the indicated function space and thus 
has a solution $w^i(x)$, which is unique up to constants. Define $\hat{u}(x,y)= \sum_i w_i(x,y) (\grad_M u(x))^i$. The following calculation shows that this $\hat{u}$ is a solution of \eqref{eq:RMuhat}: The periodicity in the variable $y$ is obvious, and we have
\begin{align*}
-\div_Y^{(x)}(  D  \grad_Y^{(x)}\hat{u}) & = -\sum_{i=1}^n (\grad_M u)^i \div_Y^{(x)}( D \grad_Y^{(x)} w_i  ) \\ &= \div_Y^{(x)} \Bigl(  D \sum_{i=1}^n (\grad_M u)^i \frac{\p}{\p y^i} \Bigr) 
 =\div_Y^{(x)}(D (\grad_M u)_Y ).
\end{align*}

\textbf{Step 2} The homogenized problem:\\
We now choose $\varphi_2=0$ in \eqref{eq:RMweaklim} to obtain $\frac{1}{|Y|} \int_{M\times Y} D(y) g_Y^{(x)}\Bigl(  (\grad_M u)_Y + \grad_Y^{(x)}\hat{u} , (\grad_M \varphi_1)_Y \Bigr)  \ud y \udvol + \int_{M}  c u \varphi_1\ud y \udvol = \int_{M}  f \varphi_1\ud y \udvol$. Inserting $\hat{u}$ and using the remarks following the definition of the transport operators, this is equivalent to
\begin{align*}
\frac{1}{|Y|} &\int_{M\times Y} D(y) g_M\Bigl(  \grad_M u +\sum_{i=1}^n  (\grad_M u)^i(\grad_Y^{(x)}  w_i )_M , \grad_M \varphi_1 \Bigr)  \ud y \udvol \\
&+ \int_{M}  c u \varphi_1\ud y \udvol = \int_{M}  f \varphi_1\ud y \udvol.
\end{align*}
Upon an integration by parts, we obtain the following strong form:
\begin{gather*}
-\div_M\Bigl(  \int_Y D[ \grad_M u  +\sum_{i=1}^n  (\grad_M u)^i(\grad_Y^{(x)}  w_i )_M ]   \ud y \Bigr) + cu = f \quad \text{in } M \\
u=0 \quad \text{on } \p M
\end{gather*}
It remains to characterize the expression 
\[K(x,y):= D(y)[ \grad_M u(x)  +\sum_{i=1}^n  (\grad_M u(x))^i(\grad_Y^{(x)}  w_i(x,y) )_M] .\] Written component-wise, we obtain
\begin{align*}
K(x,y)&= \sum_k D(y)\Bigl(  (\grad_M u(x))^k + \sum_{i,j} (\grad_M u(x))^i g^{kj}(x) \frac{\p w_i(x,y)}{\p y^j} \Bigr) \frac{\p}{\p x^k} \\
& = \sum_{k,i,j} D(y)\Bigl(  \delta^k_i(\grad_M u(x))^i + (\grad_M u(x))^i g^{kj}(x) \frac{\p w_i(x,y)}{\p y^j} \Bigr) \frac{\p}{\p x^k} \\
& = \sum_{k,i} D(y)\Bigl(  \delta^k_i + \sum_j g^{kj}(x) \frac{\p w_i(x,y)}{\p y^j} \Bigr) (\grad_M u(x))^i \frac{\p}{\p x^k} .
\end{align*}
Another expression for $K$ is given by \[K(x,y)= \sum_{k,i} D(y)(\delta^k_i + (\grad_Y^{(x)} w_i(x,y) )^k   ) (\grad_M u(x))^i \frac{\p}{\p x^k}.\]

The part $x\mapsto\delta^k_i + \sum_j g^{kj}(x) \frac{\p w_i(x,y)}{\p y^j}$ corresponds to a linear map in tensorial notation. We set $A(x,y):=[\sum_j g^{kj}(x) \frac{\p w_i(x,y)}{\p y^j}]^k_i$. Since $\Id=[\delta^k_i]^k_i$, we can apply $(\Id +A(\cdot,y))$ to $\grad_M u$ to obtain
\[
K(\cdot,y)=D(y)(\Id +A)(\grad_M u).
\] 
Integrating over $Y$, we get the expression $B\grad_M u$ as stated in the theorem. Note however that at this point we do not know whether $B$ is a tensor, i.e.\ invariant under coordinate changes. This is due to the fact that the lower index $i$ stems from an index number (of the function $w_i$) and not from a tensorial expression itself. On the other hand, the upper index $k$ stems from a tensorial expression and thus $B$ is contravariant in this index. \\
We overcome this difficulty with the result of step 3: There it is shown that the expression  $\tilde{B}=[\tilde{B}_{ki}]$, corresponding to $B$ with a lowered index $k$, is symmetric. Since $B$ is contravariant in $k$, $\tilde{B}$ is covariant in $k$ and thus, due to the symmetry, also in $i$. Therefore $B$ has to be covariant in $i$ as well, and $B$ is finally a well-defined mixed tensor corresponding to a linear map acting on vector fields.

\textbf{Step 3} Properties of the homogenized linear operator:\\
Define $\tilde{B}_{ki}=  \sum_j  g_{kj} B^j_i$. In order to show that $\tilde{B}$ is symmetric, we start with the weak formulation of the cell problem \eqref{eq:RMcellweak} for $i=\alpha$, where we use $\varphi=w_\beta$ as a test function ($\alpha, \beta \in \{ 1,\dots,n \}$),
\begin{equation*}
\int_Y D(y) g_Y^{(x)}\Bigl( \grad_Y^{(x)} w_\alpha, \grad_Y^{(x)} w_\beta   \Bigr) \ud y = - \int_Y D(y) g_Y^{(x)}\Bigl( \frac{\p}{\p y^\alpha}, \grad_y^{(x)} w_\beta \Bigr) \ud y. 
\end{equation*}
In component notation, this reads
\[
\sum_{i,j}  \int_Y D(y) g_{ij}(\grad_Y^{(x)} w_\alpha)^i (\grad_Y^{(x)} w_\beta)^j \ud y= - \sum_{j}\int_Y D(y) g_{\alpha j} (\grad_Y^{(x)} w_\beta)^j \ud y.
\]
Since $g_{\alpha j}= \sum_i \delta^i_\alpha g_{ij}$, above expression is equivalent to
\begin{equation}
\sum_{i,j} \int_Y D(y)g_{ij}[  (\grad_Y^{(x)} w_\alpha)^i + \delta^i_\alpha  ] (\grad_Y^{(x)} w_\beta)^j \ud y =0. \label{eq:RMsymm}
\end{equation}
Now we have
\begin{align*}
\tilde{B}_{\beta\alpha} &= \sum_i g_{\beta i} [ \int_Y D(y)(\Id + A(\cdot,y))  \ud y]_\alpha^i  = \sum_i \int_Y D(y) g_{\beta i} \Bigl(  \delta^i_\alpha + \sum_{k} g^{ik} \frac{\p w_\alpha}{\p y^k} \Bigr) \ud y \\
& = \sum_i \int_Y D(y) g_{\beta i} \Bigl(  \delta^i_\alpha + ( \grad_Y^{(x)} w_\alpha  )^i \Bigr) \ud y  = \sum_{i,j}  \int_Y D(y) \delta^j_\beta  g_{ij} \Bigl(  \delta^i_\alpha + ( \grad_Y^{(x)} w_\alpha  )^i \Bigr) \ud y
\end{align*}
Adding the expression \eqref{eq:RMsymm}, we get
\[
\tilde{B}_{\beta\alpha} = \sum_{i,j}  \int_Y D(y)  g_{ij}\Bigl( \delta^j_\beta + (\grad_Y^{(x)}  w_\beta)^j \Bigr) \Bigl(  \delta^i_\alpha + ( \grad_Y^{(x)} w_\alpha  )^i \Bigr) \ud y.
\]
We easily see that $\tilde{B}_{\beta\alpha}=\tilde{B}_{\alpha\beta} $. Since $\alpha,\beta\in \{ 1,\dots, n \}$ is arbitrary, $\tilde{B}$ is symmetric.

Next, we show that $\tilde{B}$ is positive: To this end, let $V$ be a vector field on $M$. Then
\begin{align}
\sum_{\alpha, \beta} & \tilde{B}_{\alpha \beta} V^\alpha V^\beta = \sum_{i,j, \alpha, \beta}  \int_Y D(y)  g_{ij}\Bigl( \delta^j_\beta + (\grad_Y^{(x)}  w_\beta)^j \Bigr) \Bigl(  \delta^i_\alpha + ( \grad_Y^{(x)} w_\alpha  )^i \Bigr)V^\alpha V^\beta \ud y \notag\\
&  = \sum_{i,j}  \int_Y D(y)  g_{ij} \sum_\beta V^\beta \Bigl( \delta^j_\beta + (\grad_Y^{(x)}  w_\beta)^j \Bigr) \sum_{\alpha} V^\alpha \Bigl(  \delta^i_\alpha + ( \grad_Y^{(x)} w_\alpha  )^i \Bigr) \ud y  \label{eq:RMdefin} \\
& = \sum_{i,j} \int_Y D(y)g_{ij}\zeta^i \zeta^j \ud y \geq 0,  \notag
\end{align}
since $D$ is positive and the $g_{ij}$'s are the coefficients of a Riemannian metric. Here $\zeta^i = \sum_\beta V^\beta \Bigl( \delta^j_\beta + (\grad_Y^{(x)}  w_\beta)^j \Bigr) $.

We now show that $\tilde{B}$ is definite. Let $V$ again be a vector field on $M$ and assume that $V\not=0$. Let $\sum_{\alpha,\beta}  \tilde{B}_{\alpha \beta} V^\alpha V^\beta =0$. Keeping in mind the definition of $\tilde{B}$, the index-free version of the second line of the previous considerations \eqref{eq:RMdefin} reads as
\[
\sum_{\alpha, \beta}  \tilde{B}_{\alpha \beta} V^\alpha V^\beta= \int_Y D(y) g_M((\Id + A)V,V) \ud y = \int_Y D(y) g_M((\Id + A)V,(\Id + A)V) \ud y.
\]
The assumption $\sum_{\alpha,\beta}  \tilde{B}_{\alpha \beta} V^\alpha V^\beta =0$ is equivalent to $g_M((\Id + A)V,(\Id + A)V)=0$ due to the positivity of $D$. Using the transport operator, this is equivalent to $g_Y^{(x)}\Bigl( (\widetilde{\Id +A})(V)_Y, (\widetilde{\Id +A})(V)_Y\Bigr)=0 $, which in turn is equivalent to $(\widetilde{\Id +A})(V)_Y=0$. Here $(\widetilde{\Id +A})$ is a map acting on a parameter-dependent vector field $W$ on $Y$ via
\[
\sum_i W^i(x,y) \frac{\p}{\p y^i} \longmapsto \sum_{i,j}[\delta^j_i  + (\grad_Y^{(x)} w_i(x,y) )^j ]W^i(x,y) \frac{\p}{\p y^j}.
\]
Consider for $i=1,\dots,n$ the auxiliary functions $\eta^i:(Y\longrightarrow \R)^M$ given by $\eta^i(x,y)= \sum_j g_{ij}(x) y^i$. It holds that
\begin{align*}
\grad_Y^{(x)} \eta^i &= \sum_{k,l} g^{kl} \frac{\p \eta^i}{\p y^l} \frac{\p}{\p y^k} = \sum_{k,l} \sum_j g^{kl} g_{ij} \underbrace{\frac{\p y^j}{\p y^l}}_{=\delta^j_k} \frac{\p}{\p y^k} \\
& = \sum_{k,l} \underbrace{g^{kl} g_{il}}_{=\delta^i_k} \frac{\p}{\p y^k} = \frac{\p}{\p y^i}.  
\end{align*}
Note that $\eta^i$ corresponds to the function $y\mapsto y^i$ in the corresponding proof from homogenization in $\R^n$, and $\frac{\p}{\p y^i}  $ corresponds to the unit vector $e_i$.

With the help of this auxiliary function $\eta^i$, we obtain
\begin{align*}
0= (\widetilde{\Id +A})(V)_Y = \sum_i \grad_Y^{(x)}( \eta^i - w^i  ) V^i =  \sum_i \grad_Y^{(x)}[ (\eta^i - w^i)V^i  ] ,
\end{align*}
since $V^i$ depends only on $x\in M$ and not on $y\in Y$. Therefore $\sum_i (\eta^i - w^i)V^i = \text{const.}$, with a constant depending on $x$ but not on $y$. This amounts to say that
\begin{equation}
\sum_i \eta^i(x,y) V^i(x) - \text{const}(x) = \sum_i w^i(x,y) V^i(x). \label{eq:RMcontrad}
\end{equation}
Since $V\not=0$, there exists a $x\in M$ with $V(x) \not=0$. Then (using matrix notation)
\begin{align*}
\sum_i \eta^i(x,y)V^i(x)& = \sum_{i,j} g_{ij}(x)y^jV^i(x) \\
& =\left( G(x) \begin{pmatrix} V^1(x) \\ \vdots \\V^n(x)     \end{pmatrix}\right)^T \cdot \begin{pmatrix} y^1 \\ \vdots \\ y^n     \end{pmatrix}.
\end{align*}
Since $\left(\begin{smallmatrix} V^1(x)\\\vdots \\V^n(x)  \end{smallmatrix}\right)\not=0$ and $G(x)$ is invertible,  $G(x) \left(\begin{smallmatrix} V^1(x) \\ \vdots \\V^n(x)     \end{smallmatrix}\right)$ is not equal to $0$ as well and thus
\[
\left( G(x) \begin{pmatrix} V^1(x) \\ \vdots \\V^n(x)     \end{pmatrix}\right)^T \cdot \begin{pmatrix} y^1 \\ \vdots \\ y^n     \end{pmatrix} \not=0
\]
for some choice of $y$. Especially, this expression is not $Y$-periodic in $y$. However, the right hand side of \eqref{eq:RMcontrad} is periodic in $y$. Thus we have reached a contradiction. This shows that $\sum_{\alpha,\beta}  \tilde{B}_{\alpha \beta} V^\alpha V^\beta =0$ implies $V=0$ and finishes the proof of the theorem.

Why is the matrix $\tilde{B}$ so important? This is due to the fact that it appears naturally in the weak formulation of the homogenized problem: Upon multiplication with a test function $\varphi\in H^1_0(M)$ and integration by parts, problem \eqref{eq:RMhomogprob} reads as
\begin{equation}
\int_M g_M(B\grad_M u, \grad_M \varphi) \udvol + \int_M cu\varphi \udvol = \int_M f \varphi \udvol. \label{eq:weakhomog}
\end{equation}
The first term can now be written in component notation as
\begin{align*}
g_M(B\grad_M u, \grad_M \varphi) &= \sum_{i,j} \sum_{\alpha} g_{ij} B^j_\alpha (\grad_M u)^\alpha (\grad_M \varphi)^i \\
&= \sum_{i,\alpha} \tilde{B}_{i\alpha} (\grad_M u)^\alpha (\grad_M \varphi)^i=:g_B(\grad_M u, \grad_M \varphi).
\end{align*}
Due to the properties of $\tilde{B}$, $g_B$ is a symmetric and coercive bilinear form on $M$, and the lemma of Lax-Milgram can be applied to the weak formulation \eqref{eq:weakhomog} above to obtain the existence and uniqueness of a solution $u$.

As a corollary to the fact that the solution of the homogenized problem is unique, we note:
\begin{corollary}
The convergence in Theorem \ref{thm:RMhomogeq} holds for the whole sequence $\ue$ (and not only for a subsequence).
\end{corollary}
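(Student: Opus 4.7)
The plan is a standard subsequence/uniqueness argument, made possible precisely because Theorem~\ref{thm:RMhomogeq} produces a problem whose solution is unique. I would begin by recalling the uniform a priori bound $\nx{\ue}{H^1(M)}\leq C$ from Section~\ref{sec:RMestimlim}, which does not depend on the extracted subsequence, so it applies to every subsequence of $\{\ue\}$.

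Next, I would take an arbitrary subsequence $\{u^{\eps_k}\}$ of $\{\ue\}$. Because the a priori estimate is still valid along this subsequence, Theorem~\ref{thm:compactgrad} together with the Rellich-type embeddings used in Section~\ref{sec:RMestimlim} allows me to extract a further subsequence (still denoted $u^{\eps_k}$) and limits $u'\in H^1_0(M)$, $\hat u'\in L^2(M;H^1_\#(Y))$ such that the convergences \eqref{eq:convergence} hold with $u,\hat u$ replaced by $u',\hat u'$. Running Steps~1--3 of the proof of Theorem~\ref{thm:RMhomogeq} on this further subsequence with exactly the same test functions $\varphi^\eps$ shows that $u'$ solves the homogenized problem \eqref{eq:RMhomogprob}, i.e.\ satisfies the weak formulation \eqref{eq:weakhomog}.

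By the Lax--Milgram argument based on the symmetry, coercivity and boundedness of the bilinear form $g_B$, problem \eqref{eq:weakhomog} admits a unique solution; hence $u'=u$. Consequently every subsequence of $\{\ue\}$ contains a further subsequence converging to the same limit $u$ in the senses listed in \eqref{eq:convergence}, and the Urysohn subsequence principle applied in each of the topologies ($H^1(M)$-weak, $L^2(M)$-strong, and the two unfolded topologies on $M\times Y$) gives convergence of the whole sequence.

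The only potentially non-routine point is checking that the limit equation is literally the same regardless of which subsequence one starts with: this comes for free because the cell problem~\eqref{eq:cell}, the tensor $A$, and hence the operator $B$ depend only on $D$ and on the metric $g_M$ through $g_Y^{(x)}$, not on the sequence. Thus no real obstacle is expected; the entire argument is a direct bookkeeping consequence of Theorem~\ref{thm:RMhomogeq} and the uniqueness furnished by the positive definiteness of $\tilde B$ established in Step~3.
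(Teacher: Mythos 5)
Your argument is correct and is exactly the reasoning the paper relies on: the paper states the corollary as an immediate consequence of the uniqueness of the solution to the homogenized problem furnished by Lax--Milgram, which is precisely your subsequence--uniqueness (Urysohn) argument spelled out in detail. No discrepancy.
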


With the help of the implicit function theorem for Banach spaces, one shows that the cell problem is smooth with respect to the parameter:
\begin{theorem}
For the solution $w_i$ of the cell problem \eqref{eq:cell} it holds
\begin{equation*}
w_i \in \Omega^1_0( M, H^1_\#(Y)/\R    ), \quad i=1\dots,n.
\end{equation*}
\end{theorem}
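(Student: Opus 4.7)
The plan is to recast the cell problem as a parameter-dependent abstract equation on a Banach space and then invoke the implicit function theorem. Set $V:=H^1_\#(Y)/\R$ and, for each $i\in\{1,\dots,n\}$, define a map $F_i:M\times V\to V^*$ by
\[
\langle F_i(x,w),\varphi\rangle := \int_Y D(y)\,g_Y^{(x)}\!\bigl(\grad_Y^{(x)}w+\tfrac{\p}{\p y^i},\,\grad_Y^{(x)}\varphi\bigr)\,\d y,\qquad \varphi\in V.
\]
By the weak formulation \eqref{eq:RMcellweak}, $F_i(x,w_i(x))=0$ for every $x\in M$, and we want to show that the mapping $x\mapsto w_i(x)$ inherits smoothness from the data.

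First I would verify smooth dependence of $F_i$ on both arguments. The map is affine in $w$, so smoothness reduces to smooth dependence of the coefficients $g_{ij}(x)$ and $g^{ij}(x)$ on $x\in M$, which follows from the smoothness of the Riemannian metric $g_M$ together with smoothness of matrix inversion on the (pointwise) positive-definite matrices $G(x)$. Expanding $g_Y^{(x)}(\cdot,\cdot)$ in local coordinates on a chart shows that the coefficient functions of $F_i$ are smooth $\R$-valued functions of $x$, and that the associated continuous linear map $V\to V^*$ depends smoothly (in the operator norm topology) on $x$.

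Next, I would show that the partial derivative $D_wF_i(x,w)\in\mathcal{L}(V,V^*)$ is an isomorphism at every $x\in M$. Because $F_i$ is affine in $w$, this derivative equals the bilinear form
\[
a_x(\psi,\varphi):=\int_Y D(y)\,g_Y^{(x)}\!\bigl(\grad_Y^{(x)}\psi,\grad_Y^{(x)}\varphi\bigr)\,\d y,
\]
which is continuous and, by positivity of $D$ and uniform ellipticity of $g_Y^{(x)}$ (inherited from $g_M$ via compactness of $M$), coercive on $V$. The Lax--Milgram lemma therefore realises $D_wF_i(x,\cdot)$ as a topological isomorphism $V\to V^*$ for every $x$. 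The implicit function theorem for Banach spaces then produces, around each $x_0\in M$, a smooth local solution $x\mapsto \tilde{w}(x)\in V$ of $F_i(x,\tilde{w}(x))=0$. Uniqueness of the cell-problem solution in $V$ implies $\tilde{w}=w_i$ near $x_0$, and patching over a covering of $M$ gives the claimed smoothness (with the stated boundary condition inherited trivially, as the cell problem is posed at every $x\in M$ including $\p M$).

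The main obstacle I anticipate is the careful verification that $x\mapsto D_wF_i(x,\cdot)$ is smooth as a map into $\mathcal{L}(V,V^*)$, not merely pointwise. This requires checking that the differentiation of $a_x$ in $x$ can be absorbed into the coefficient dependence without involving the $x$-dependence hidden in $\grad_Y^{(x)}$ through its metric, which is handled by writing $\grad_Y^{(x)}\psi=\sum_{k,\ell}g^{k\ell}(x)\partial_{y^\ell}\psi\,\partial/\partial y^k$ and exploiting smoothness of $x\mapsto g^{k\ell}(x)$ together with the uniform $L^\infty$-bound on $D$. Once this is established the rest of the argument is a routine application of the implicit function theorem.
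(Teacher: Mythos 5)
Your proposal follows exactly the route the paper indicates: the paper's entire justification is the one-line remark that the result follows from the implicit function theorem for Banach spaces, and your plan (affine dependence on $w$, smooth dependence of the coefficients $g^{k\ell}(x)$ on $x$, invertibility of the linearisation via Lax--Milgram with coercivity uniform over the compact $M$) is a correct fleshing-out of that argument. The only small slip is your reading of the subscript in $\Omega^1_0$: by the paper's convention $\O^k_l$ the subscript records the form degree ($0$-forms, i.e.\ functions), not a boundary or support condition, so there is no boundary condition to be ``inherited''.
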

Here $\O_l^k(M,X)$ denotes the space of $k$-times continuously differentiable $l$-forms in $M$ with values in the Banach space $X$, see \cite{cartandifforms}.

\section{Equivalent Atlases}
\label{sec:RMequivatl}

In this section we construct an equivalence relation between certain UC-atlases for $M$ and show that all equivalent atlases lead to the same limit problem \eqref{eq:RMhomogprob} in Theorem \ref{thm:RMhomogeq}. 
To this end, let $Y$ and $Z$ be two rectangular subsets of $\R^n$ (not necessarily restricted to $[0,1)^n$), representing two reference cells. We assume both cells to be equipped with the chart $\Id$ and denote the local basis vectors by $\frac{\p}{\p y^i}$ (for $Y$) and $\frac{\p}{\p z^i}$ (for $Z$), $i=1,\dots, n$. The corresponding dual forms are denoted by $\d y^i$ and $\d z^i$, respectively. 

Let there be given two scalar functions $D_Y:Y\longrightarrow \R$ and $D_Z:Z \longrightarrow \R$, representing e.g.\ material properties as above. We need the following assumptions:
\begin{ass}
\label{ass:RMequiv}
Let the manifold $M$ be equipped with two atlases $\mathscr{A}^1=\{ \phi^1_\alpha: U^1_\alpha \longrightarrow V^1_\alpha; \alpha \in I   \}$ and $\mathscr{A}^2=\{ \phi^2_\alpha: U^2_\alpha \longrightarrow V^2_\alpha; \alpha \in \tilde{I}   \}$, both satisfying the UC-criterium, with some finite index sets $I$ and $\tilde{I}$. Assume that whenever $U^1_\alpha \cap U^2_\beta \not= \emptyset$ for some $\alpha\in I, \beta \in \tilde{I}$, then the coordinate transformation $\phi^2_\beta \circ (\phi^1_\alpha)^{-1}$ is the restriction of some linear map $\tilde{F}:\R^n \longrightarrow \R^n$ to $V^1_\alpha$. Lemma \ref{lem:RMtransfocell} shows that this map $\tilde{F}$ is unique across different charts, thus it is not restrictive to assume the existence of \emph{one} linear map $F:\R^n \longrightarrow \R^n$ such that $\phi^2_\beta \circ (\phi^1_\alpha)^{-1}=F|_{V^1_\alpha}$ for all suitable index pairs.

Furthermore, assume that $F|_Y:Y\longrightarrow Z$ is a coordinate transformation between the reference cells such that for the functions $D_Y$ and $D_Z$ representing material properties, we have $D_Z= (F|_Y)_* D_Y$.
\end{ass}

To give a simple example how these assumptions apply, consider the following situation: 
\label{exmp:RMsk}
Let $\O\subset \R^2$ be a domain. We equip $\O$ with two different atlases, each consisting of one chart: $\mathscr{A}_1:=\{ \Id:\O \longrightarrow \O \}$ as well as $\mathscr{A}_2:=\{ \mathrm{Sk}:\O\longrightarrow \mathrm{Sk}(\O) \}$, where the map $\mathrm{Sk}$ is given by
\begin{align*}
\mathrm{Sk}:\R^n &\longrightarrow \R^n \\
\begin{pmatrix} x_1 \\ x_2   \end{pmatrix} & \longmapsto \begin{pmatrix} x_2 \\ x_1 \end{pmatrix}.
\end{align*} 
As reference cells, we use $Y_1=[0,1)^2$ (for $\mathscr{A}_1$) and $Y_2=[0,1)^2$ (for $\mathscr{A}_2$). Furthermore, let $D_{Y_1}:Y_1\longrightarrow \R$ be a function (representing material properties in the first reference cell) and set $D_{Y_2}(y_1,y_2):=D_{Y_1}(y_2,y_1)$.
Since $\mathrm{Sk} \circ \Id^{-1}=\mathrm{Sk} :\O \longrightarrow \mathrm{Sk}(\O)  $ can be trivially extended to the linear map $\mathrm{Sk}$, defined on the whole of $\R^n$, and since $\mathrm{Sk}:Y_1 \longrightarrow Y_2$ is a coordinate transformation of the reference cells such that $D_{Y_2} = \mathrm{Sk}_* D_{Y_1}$, the Assumptions \ref{ass:RMequiv} are fulfilled. 

In the situation of the preceding example, one can also consider $\mathscr{A}_1:=\{ \Id:\O \longrightarrow \O \}$ and $\mathscr{A}_2:=\{ 2\Id:\O \longrightarrow 2\Id(\O) \}$, with reference cells $Y_1=[0,1)^2$ as well as $Y_2=[0,2)^2$ and functions $D_{Y_1}$ as above with $D_{Y_2}(y_1,y_2)= D_{Y_1}( \frac{y_1}{2},\frac{y_2}{2} )$. Here
\begin{align*}
2\Id:\R^n & \longrightarrow \R^n \\
\begin{pmatrix} x_1 \\ x_2   \end{pmatrix} & \longmapsto \begin{pmatrix} 2x_1 \\ 2x_2 \end{pmatrix}.
\end{align*}

\subsection{Results}

The main result of this section is the following
\begin{theorem}
\label{thm:independence}
Under the Assumptions \ref{ass:RMequiv}, the limit problem \eqref{eq:RMhomogprob} is independent of the atlas $\mathscr{A}_i$, $i=1,2$.
\end{theorem}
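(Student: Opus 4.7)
My strategy is to reduce the invariance statement to uniqueness of the Lax-Milgram solution of \eqref{eq:weakhomog}, by showing that every ingredient of that weak form is intrinsic to $M$. This requires three verifications: the $\eps$-problem itself is atlas-independent; the cell problem solutions transform covariantly under $F$; and the resulting tensor $\tilde{B}$ is a genuine covariant $2$-tensor on $M$.

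\textbf{Atlas-independence of the $\eps$-problem.} The only datum in \eqref{eq:RMmainstr} that could a priori distinguish the atlases is $\De$. On $U^1_\alpha\cap U^2_\beta$ we have $\phi^2_\beta=F\circ\phi^1_\alpha$ with $F$ linear, hence $\phi^2_\beta(x)/\eps=F(\phi^1_\alpha(x)/\eps)$. Because $F|_Y:Y\to Z$ matches the reference cells bijectively and therefore sends the $Y$-lattice onto the $Z$-lattice, one has $\{F(y)\}_Z=F(\{y\}_Y)$; combined with $D_Z=(F|_Y)_*D_Y=D_Y\circ F^{-1}$ this gives $D_Z(\{\phi^2_\beta(x)/\eps\}_Z)=D_Y(\{\phi^1_\alpha(x)/\eps\}_Y)$, so the two definitions of $\De$ agree on $M$. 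Consequently $\ue$ and its weak limit $u\in H^1_0(M)$ coincide for both atlases.

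\textbf{Transformation of the cell problem and of $\tilde{B}$.} Let $w^1_j(x,\cdot)\in H^1_\#(Y)/\R$ and $w^2_i(x,\cdot)\in H^1_\#(Z)/\R$ denote the correctors from \eqref{eq:cell} for $\mathscr{A}^1$ and $\mathscr{A}^2$. Using $\frac{\p}{\p y^j}=\sum_i F^i_j\frac{\p}{\p z^i}$ together with the fact that $F$ pushes $g_Y^{(x)}$ forward to $g_Z^{(x)}$ and $D_Y$ forward to $D_Z$, I would verify by direct substitution into \eqref{eq:RMcellweak} that
\[
w^2_i(x,z)=\sum_{j=1}^n (F^{-1})^j_i\,w^1_j(x,F^{-1}(z))
\]
solves the cell problem for $\mathscr{A}^2$; periodicity passes through because $F$ sends lattice to lattice, and uniqueness up to constants pins down $w^2_i$. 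Substituting this identity into the closed formula
\[
\tilde{B}_{\alpha\beta}(x)=\sum_{i,j}\int_Y D(y)\,g_{ij}(x)\bigl(\delta^i_\alpha+(\grad_Y^{(x)}w_\alpha)^i\bigr)\bigl(\delta^j_\beta+(\grad_Y^{(x)}w_\beta)^j\bigr)\ud y
\]
established in Step 3 of the proof of Theorem \ref{thm:RMhomogeq}, changing variables $z=F(y)$ (with Jacobian $|\det F|$) and applying the tensor transformation law for $g_{ij}$ together with the pushforward rule for $D$, one should obtain
\[
\tilde{B}^2_{kl}(x)=\sum_{\alpha,\beta}(F^{-1})^\alpha_k\,(F^{-1})^\beta_l\,\tilde{B}^1_{\alpha\beta}(x),
\]
which is exactly the coordinate change rule for a covariant $2$-tensor on $M$.

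\textbf{Conclusion and main obstacle.} Because $\tilde{B}$, $c$, $f$, $g_M$ and the volume form are then all intrinsic, the bilinear form $g_B$ and hence the weak formulation \eqref{eq:weakhomog} is the same in both atlases, so Lax-Milgram delivers the same $u$. The delicate part is the bookkeeping in the second step: one must simultaneously handle the Jacobian $|\det F|$ from the change of variables on the cell, the non-trivial transformation of $g_{ij}$ (since $F$ is linear but not an isometry), and the metric-dependent operators $\grad_Y^{(x)}$, $\div_Y^{(x)}$. Once the covariance rule for $\tilde{B}$ is in place, everything else reduces to uniqueness in Theorem \ref{thm:RMhomogeq}.
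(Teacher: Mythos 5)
Your proposal follows essentially the same route as the paper: both reduce the claim to (i) the atlas-independence of $D^\eps$ (the paper's Lemma \ref{lem:RMDeps}), (ii) the transformation behaviour of the correctors under the linear map $F$ (the paper's Lemmas \ref{lem:RMtransfometric} and \ref{lem:RMtransfocell}, which, combined with the linearity of $Q\mapsto w^Q$, give exactly your formula $w^2_i=\sum_j (F^{-1})^j_i\, w^1_j\circ F^{-1}$), and (iii) a change of variables in the cell integral; the only real difference is presentational, in that the paper pushes forward the flux $B\grad_M u$ and compares strong forms, whereas you derive the covariant transformation law for $\tilde{B}$ and compare weak formulations. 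One point you must make explicit: with general rectangular reference cells (e.g.\ $Y=[0,1)^2$ versus $Z=[0,2)^2$), the homogenized coefficient has to be read as the cell average $\frac{1}{|Y|}\int_Y(\cdots)\ud y$; the un-normalized integral you quote from Step 3 acquires a spurious factor $|\det F|$ under $z=F(y)$, and it is precisely the identity $|\det F^{-1}|/|Y|=1/|Z|$ that the paper invokes to cancel the Jacobian and obtain the clean tensor transformation law.
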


The class of atlases satisfying the assumptions given above constitutes an equivalence relation, as one can verify directly:
\begin{proposition}
Let $\mathscr{A}$ and $\mathscr{B}$ be two atlases for $M$, both satisfying the UC-criterion. We write $\mathscr{A}\sim \mathscr{B}$ to denote that the couple ($\mathscr{A},\mathscr{B})$ satisfies the Assumptions \ref{ass:RMequiv}. Then the relation '$\sim$' is an equivalence relation on the set of UC-atlases.
\end{proposition}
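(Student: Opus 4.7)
The plan is to verify reflexivity, symmetry, and transitivity of $\sim$ directly from Assumption \ref{ass:RMequiv}. The witness for each instance is a linear map $F:\R^n\to\R^n$, and the three axioms will correspond respectively to choosing $F=\mathrm{Id}$, replacing $F$ by $F^{-1}$, and composing two witnesses. For reflexivity $\mathscr{A}\sim \mathscr{A}$ I would take $F=\mathrm{Id}_{\R^n}$ together with $Y=Z$ and $D_Y=D_Z$. On overlaps of two charts within a single atlas $\mathscr{A}$ the UC-criterion itself supplies the required compatibility with the identity (the discrepancies being integer-$\eps$ translations built into the UC structure), and $\mathrm{Id}_* D_Y = D_Y$ is trivial.

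For symmetry, suppose $\mathscr{A}^1\sim \mathscr{A}^2$ is witnessed by a linear $F:\R^n\to\R^n$ whose restriction $F|_Y:Y\to Z$ is a coordinate transformation between reference cells. Since such a restriction must be a bijection between open subsets of $\R^n$, the linear map $F$ must itself be invertible on $\R^n$ and its inverse $F^{-1}$ is again linear. On every overlap one then has $\phi^1_\alpha\circ (\phi^2_\beta)^{-1} = F^{-1}|_{V^2_\beta}$, and functoriality of pushforward yields $D_Y = (F^{-1}|_Z)_* D_Z$; hence $F^{-1}$ witnesses $\mathscr{A}^2\sim \mathscr{A}^1$.

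For transitivity, suppose $\mathscr{A}^1\sim \mathscr{A}^2$ via $F$ and $\mathscr{A}^2\sim \mathscr{A}^3$ via $G$. I would take the witness $H:=G\circ F$, linear as a composition of linear maps. Given an overlap $U^1_\alpha\cap U^3_\gamma\not=\emptyset$, I would invoke finiteness and the covering property of $\mathscr{A}^2$ to select an index $\beta$ with $U^1_\alpha\cap U^2_\beta\cap U^3_\gamma\not=\emptyset$; on this triple intersection
\[
\phi^3_\gamma\circ (\phi^1_\alpha)^{-1} = \bigl(\phi^3_\gamma\circ (\phi^2_\beta)^{-1}\bigr)\circ \bigl(\phi^2_\beta\circ (\phi^1_\alpha)^{-1}\bigr) = G\circ F = H,
\]
which extends uniquely across $V^1_\alpha$ by Lemma \ref{lem:RMtransfocell}. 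Functoriality of pushforward then delivers $D_W = (H|_Y)_* D_Y$, where $W$ is the reference cell of $\mathscr{A}^3$.

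The main subtle point is in transitivity: one must secure an intermediate chart of $\mathscr{A}^2$ whose domain meets the given overlap in both outer atlases, and one must know that the resulting linear witness is independent of the local choices made—both guaranteed by compactness of $M$, finiteness of the atlases, and the uniqueness assertion of Lemma \ref{lem:RMtransfocell}. Beyond this, no deep ideas are needed: the argument is purely formal and rests on invertibility of linear coordinate transformations together with functoriality of pushforward.
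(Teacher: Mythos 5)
Your three-step direct verification (the identity for reflexivity, $F^{-1}$ for symmetry, $G\circ F$ for transitivity) is precisely the ``one can verify directly'' that the paper alludes to; the paper prints no proof of this proposition, so there is no competing argument to compare against. Symmetry and transitivity are handled correctly: invertibility of $F$ follows from its restriction being a coordinate transformation of the reference cells, the single-$F$ clause of Assumption~\ref{ass:RMequiv} makes the composite $G\circ F$ independent of the intermediate chart (so no real ``extension'' step is needed --- every point of $U^1_\alpha\cap U^3_\gamma$ lies in some $U^2_\beta$, and the \emph{same} $F$ and $G$ serve there), and functoriality of pushforward gives the condition on the material functions. A minor slip: the uniqueness statement you invoke is Lemma~\ref{lem:RMFunique}, not Lemma~\ref{lem:RMtransfocell} (the paper itself miscites this inside Assumption~\ref{ass:RMequiv}).

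The one genuine soft spot is reflexivity. Within a single UC-atlas, Definition~\ref{def:RMuc} forces the transition maps $\phi_\beta\circ\phi_\alpha^{-1}$ on overlaps to be translations by $\eps k(\eps)\in\eps\Z^n$, and a nonzero translation is affine but \emph{not} linear, hence not the restriction of $\mathrm{Id}_{\R^n}$ (nor of any linear map). Under a literal reading of Assumption~\ref{ass:RMequiv}, the pair $(\mathscr{A},\mathscr{A})$ therefore fails the hypothesis as soon as $\mathscr{A}$ contains two overlapping charts with $k\neq 0$; your parenthetical ``the discrepancies being integer-$\eps$ translations built into the UC structure'' asserts the compatibility rather than proving it. The repair is to state explicitly that Assumption~\ref{ass:RMequiv} is to be read modulo such $\eps\Z^n$-translations (equivalently: the transition maps are affine with a common linear part $F$ and translation part in $\eps\Z^n$), and to note that this is harmless because every quantity entering the limit problem --- the derivative $F=D(\phi^2_\beta\circ(\phi^1_\alpha)^{-1})$, the induced map between the periodic cells, and the pushforward of the periodic function $D_Y$ --- is invariant under these translations; this is the same convention the paper tacitly uses in the proof of Lemma~\ref{lem:RMFunique}. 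With that reading made precise, your argument goes through.
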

This result tacitly assumes that there exist reference cells $Y_{\mathscr{A}}$ (belonging to $\mathscr{A}$) and $Y_{\mathscr{B}}$ (belonging to ${\mathscr{B}}$) etc.\ as well as "material-property"-functions $D_{Y_{\mathscr{A}}}$ and $D_{Y_{\mathscr{B}}}$ as stated above.

\subsection{Proof of Theorem \ref{thm:independence}}


We begin with showing the asserted uniqueness of the transformation $F$ and collect further results needed for subsequent derivations:

\begin{lemma}
\label{lem:RMFunique}
Let $\alpha, \tilde{\alpha}$ be two indices with $V^1_\alpha \cap V^1_{\tilde{\alpha}}\not= \emptyset$. Choose $\beta, \tilde{\beta}$ such that $U^1_\alpha \cap U^2_\beta\not = \emptyset$ and $U^1_{\tilde{\alpha}} \cap U^2_{\tilde{\beta}} \not= \emptyset$. Then $\phi^2_\beta \circ (\phi^1_\alpha)^{-1} = \phi^2_{\tilde{\beta}}\circ (\phi^1_{\tilde{\alpha}})^{-1}$ in $V^1_\alpha \cap V^1_{\tilde{\alpha}}$. Thus the linear map $\tilde{F}$ is unique for all index pairs.
\end{lemma}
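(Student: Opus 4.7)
The plan is to reduce the asserted equality of the two transition maps on $V^1_\alpha \cap V^1_{\tilde\alpha}$ to the equality of two linear maps on all of $\R^n$. Let $F$ denote the linear map extending $\phi^2_\beta \circ (\phi^1_\alpha)^{-1}$ and $F'$ the one extending $\phi^2_{\tilde\beta} \circ (\phi^1_{\tilde\alpha})^{-1}$. Since both are linear on $\R^n$, once I exhibit a nonempty open subset on which $F = F'$, linearity immediately yields $F = F'$ globally, and in particular on $V^1_\alpha \cap V^1_{\tilde\alpha}$.

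First I would treat the generic case in which the quadruple overlap $W := U^1_\alpha \cap U^1_{\tilde\alpha} \cap U^2_\beta \cap U^2_{\tilde\beta}$ is nonempty (hence open in $M$). On $W$, the UC-criterion applied separately to each atlas furnishes integer vectors $k_1, k_2 \in \Z^n$ with
\[
\phi^1_\alpha = \phi^1_{\tilde\alpha} + \eps k_1, \qquad \phi^2_\beta = \phi^2_{\tilde\beta} + \eps k_2.
\]
For any $p \in W$, write $v := \phi^1_{\tilde\alpha}(p)$, so that $v + \eps k_1 = \phi^1_\alpha(p) \in V^1_\alpha$ and $v \in V^1_{\tilde\alpha}$. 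The defining relations of $F$ and $F'$ then give
\[
F(v + \eps k_1) = F(\phi^1_\alpha(p)) = \phi^2_\beta(p) = \phi^2_{\tilde\beta}(p) + \eps k_2 = F'(v) + \eps k_2.
\]
Linearity of $F$ yields $(F - F')(v) = \eps(k_2 - F(k_1))$, independent of $p$. Hence $F - F'$ is constant on the open set $\phi^1_{\tilde\alpha}(W) \subset \R^n$; applying $(F-F')(v_1) - (F-F')(v_2) = (F-F')(v_1 - v_2)$ to pairs in this open set shows that $F - F'$ vanishes on an open neighborhood of the origin, and homogeneity of a linear map then extends this to all of $\R^n$. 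As a byproduct, the constant $\eps(k_2 - F(k_1))$ is itself zero.

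The main technical obstacle is the degenerate case in which $W$ is empty while $V^1_\alpha \cap V^1_{\tilde\alpha}$ is not. Here I would invoke finiteness of both atlases and compactness of $M$ to build a finite chain of index pairs $(\alpha, \beta) = (\alpha_0, \beta_0), (\alpha_1, \beta_1), \dots, (\alpha_N, \beta_N) = (\tilde\alpha, \tilde\beta)$ such that each consecutive pair admits a nonempty quadruple overlap, and then iterate the preceding computation along the chain to propagate the equality of the associated linear maps. Once this is done, $\tilde F$ is seen to be independent of the chosen index pair, which is the uniqueness statement of the lemma.
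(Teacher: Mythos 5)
Your main argument is essentially the paper's own proof in different clothing. The paper also invokes the UC-criterion to write $\phi^1_{\tilde\alpha}=\phi^1_\alpha+K$ and $\phi^2_{\tilde\beta}=\phi^2_\beta+\tilde K$, and then exploits linearity by identifying each transition map with its total derivative and checking via the chain rule that $D[\phi^2_{\tilde\beta}\circ(\phi^1_{\tilde\alpha})^{-1}]=D[\phi^2_\beta\circ(\phi^1_\alpha)^{-1}]$; you instead show that $F-F'$ is constant on the open set $\phi^1_{\tilde\alpha}(W)$ and use homogeneity of a linear map to force that constant to be zero. These are the same idea, and your computation in the generic case is correct. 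Where you genuinely go beyond the paper is in noticing that both arguments require the relevant overlaps to be nonempty: the pointwise identities $F(\phi^1_\alpha(p))=\phi^2_\beta(p)$ and $F'(\phi^1_{\tilde\alpha}(p))=\phi^2_{\tilde\beta}(p)$ together with the two translation relations only make sense on the quadruple overlap $W$, a hypothesis the paper uses silently. Your proposed remedy for the case $W=\emptyset$ is, however, only a sketch: finiteness of the atlases and compactness of $M$ do not by themselves produce a chain of index pairs with consecutive nonempty quadruple overlaps — that requires (path-)connectedness of $M$, or at least of the region one is chaining through. The clean way to finish is to observe that the assignment $p\mapsto F_{\gamma\delta}$ for any pair with $p\in U^1_\gamma\cap U^2_\delta$ is well defined by your generic-case computation and locally constant, hence constant on connected components; on a connected $M$ this gives the uniqueness of $\tilde F$, which is what Assumption \ref{ass:RMequiv} ultimately needs.
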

\begin{proof}
Due to the UC-criterion, there exists $K, \tilde{K} \in \R^n$ such that $\phi^1_{\tilde{\alpha}}= \phi^1_\alpha + K$ as well as $\phi^2_{\tilde{\beta}}= \phi^2_\beta + \tilde{K}$. This implies $(\phi^1_{\tilde{\alpha}})^{-1}= (\phi^1_\alpha)^{-1}(\cdot - K)$.

 Since $\phi^2_\beta \circ (\phi^1_\alpha)^{-1}$ as well as $\phi^2_{\tilde\beta} \circ (\phi^1_{\tilde{\alpha}})^{-1}$ are supposed to be linear, it holds $\phi^2_\beta \circ (\phi^1_\alpha)^{-1} = D(\phi^2_\beta \circ (\phi^1_\alpha)^{-1})$ and $\phi^2_{\tilde\beta} \circ (\phi^1_{\tilde{\alpha}})^{-1}= D(\phi^2_{\tilde\beta} \circ (\phi^1_{\tilde{\alpha}})^{-1})$, where $D$ denotes the total derivative. By the chain rule, we obtain
\[
D[\phi^2_{\tilde\beta} \circ (\phi^1_{\tilde{\alpha}})^{-1}] = D[ \phi^2_\beta( (\phi^1_\alpha)^{-1}(\cdot - K)  ) + \tilde{K}  ] = D[\phi^2_\beta \circ (\phi^1_\alpha)^{-1}   ].
\]
This implies the asserted equality and the uniqueness of the linear map $\tilde{F}$.
\qquad\end{proof}

If we interpret \eqref{eq:RMmainstr} as a stationary heat equation, then $D$ stands for a material property -- in this case the heat conductivity of the underlying material.
The next lemma shows that the description of these material properties is independent of the atlas:
\begin{lemma}
\label{lem:RMDeps}
Under the assumptions \ref{ass:RMequiv}, it holds 
\[
D^\eps(x)= D_Y(\frac{\phi^1_\alpha(x)}{\eps})=D_Z(\frac{\phi^2_\beta(x)}{\eps})
\] 
for $x\in U^1_\alpha \cap U^2_\beta$. 
\end{lemma}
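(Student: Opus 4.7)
The plan is to chain together three equalities: one coming from the defining formula for $D^\eps$ relative to the first atlas, one coming from the linearity of the transition map $F$, and one coming from the pushforward identity $D_Z=(F|_Y)_* D_Y$.

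First I would unpack the first equality. By the definition of $D^\eps$ given at the start of Section~\ref{sec:applell} applied to the atlas $\mathscr{A}^1$ and chart $\phi^1_\alpha$, we have
\[
D^\eps(x) = D_Y\!\left(\left\{\frac{\phi^1_\alpha(x)}{\eps}\right\}\right).
\]
Using the $Y$-periodicity of $D_Y$ (which is part of the standing hypothesis on the coefficient function), the fractional-part brackets can be dropped, giving the first asserted identity $D^\eps(x) = D_Y(\phi^1_\alpha(x)/\eps)$.

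Next I would exploit Assumption~\ref{ass:RMequiv}. On $U^1_\alpha \cap U^2_\beta$ the transition map satisfies $\phi^2_\beta\circ(\phi^1_\alpha)^{-1} = F|_{V^1_\alpha}$ with $F$ linear, hence $\phi^2_\beta(x) = F(\phi^1_\alpha(x))$ for every such $x$. Linearity of $F$ commutes with scaling, so
\[
\frac{\phi^2_\beta(x)}{\eps} \;=\; \frac{F(\phi^1_\alpha(x))}{\eps} \;=\; F\!\left(\frac{\phi^1_\alpha(x)}{\eps}\right).
\]
Finally, the assumption $D_Z = (F|_Y)_* D_Y = D_Y\circ F^{-1}$ yields
\[
D_Z\!\left(\frac{\phi^2_\beta(x)}{\eps}\right) \;=\; D_Z\!\left(F\!\left(\frac{\phi^1_\alpha(x)}{\eps}\right)\right) \;=\; D_Y\!\left(\frac{\phi^1_\alpha(x)}{\eps}\right),
\]
which matches the first expression and closes the chain.

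The only delicate point, which I consider the main obstacle, is the interaction between the fractional-part operation and the linear map $F$: a priori $\{F(z)/\eps\}$ and $F(\{z/\eps\})$ need not coincide in $Y$ or $Z$. This is reconciled by working with the periodically extended versions of $D_Y$ and $D_Z$ throughout (so that the bracket can be inserted or removed freely), and by appealing to the hypothesis that $F|_Y:Y\to Z$ is a coordinate transformation of the reference cells intertwining $D_Y$ and $D_Z$. In other words, the compatibility of $F$ with the period lattices is exactly what Assumption~\ref{ass:RMequiv} enforces, and without that compatibility the lemma could fail; with it, the three-step computation above is immediate.
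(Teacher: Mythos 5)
Your proposal is correct and follows essentially the same route as the paper's proof: commute the linear map $F=\phi^2_\beta\circ(\phi^1_\alpha)^{-1}$ with division by $\eps$, then apply $D_Y=F^*D_Z$ (equivalently $D_Z=F_*D_Y$) to close the chain. Your extra care about reconciling the fractional-part brackets with the periodic extensions is a point the paper's proof passes over silently, but it does not change the argument.
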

\begin{proof}
Since $F=\phi^2_\beta\circ (\phi^1_\alpha)^{-1}$ is linear, one has $\frac{\phi^2_\beta\circ (\phi^1_\alpha)^{-1}(\phi^1_\alpha(x))}{\eps}= \phi^2_\beta\circ (\phi^1_\alpha)^{-1}(\frac{\phi^1_\alpha(x)}{\eps})$. This gives
\begin{align*}
D^\eps(x) &:= D_Z( \frac{\phi^2_\beta(x)}{\eps}) = D_Z(\frac{\phi^2_\beta\circ (\phi^1_\alpha)^{-1}(\phi^1_\alpha(x))}{\eps}    ) = D_Z(\underbrace{\phi^2_\beta\circ (\phi^1_\alpha)^{-1}}_{=F} (\frac{\phi^1_\alpha(x)}{\eps}))\\
&= (F^*D_Z) ( \frac{\phi^1_\alpha(x)}{\eps})  .
\end{align*}
Since $D_Z = F_* D_Y \Leftrightarrow D_Y = F^* D_Z$, the last expression is equal to $D_Y( \frac{\phi^1_\alpha(x)}{\eps} )$, which finishes the proof.
\qquad\end{proof}

The following lemma describes the transformation behaviour of the cell problems, which will later imply the main result:

\begin{lemma}
\label{lem:RMtransfocell}
Let $\phi:Y\longrightarrow Z$ be a coordinate transformation between the reference cells. As above, define $g_Y^{(x)}= \sum_{i,j} g_{ij}(x) \ud y^i \otimes \d y^j$ and $g_Z^{(x)}= \sum_{i,j} g_{ij}(x) \ud z^i \otimes \d z^j$ and assume that there exists a $\lambda>0$ such that both metrics are related by $\lambda g_Z^{(x)}= \phi_* g_Y^{(x)}$. Furthermore, assume that $D_Z=\phi_* D_Y$. 

For given vector fields $Q\in L^2TY$ and $H\in L^2TZ$ consider the generalised cell problems: Find $w_Y^Q \in H^1_\#(Y)/ \R$ and $w_Z^H \in H^1_\#(Z)/ \R$ such that for fixed $x\in M$
\begin{subequations}
\begin{gather}
-\div_Y^{(x)}( D_Y\grad_Y^{(x)} w_Y^Q  ) = \div_Y^{(x)}(D_Y Q) \quad \text{in } Y \label{eq:RMgencellY}\\
y \longmapsto w_Y^Q(x,y) \quad \text{is $Y$-periodic}
\end{gather}
\end{subequations}
and
\begin{subequations}
\label{eq:RMgencellZ}
\begin{gather}
-\div_Z^{(x)}( D_Z\grad_Z^{(x)} w_Z^H  ) = \div_Z^{(x)}(D_Z H) \quad \text{in } Z \\
z \longmapsto w_Z^H(x,z) \quad \text{is $Z$-periodic}.
\end{gather}
\end{subequations}
Then it holds
\[
\lambda \phi_* w_Y^Q = w_Z^{\phi_* Q} \quad \text{as well as} \quad  \phi_*( \grad_Y^{(x)} w_Y^Q ) =  \grad_Z^{(x)} w_Z^{\phi_* Q}.
\]
\end{lemma}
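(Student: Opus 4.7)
My plan is to insert a pulled-back test function into the weak formulation of the $Y$-cell problem and, after transforming the resulting integral over to $Z$, compare the outcome with the weak formulation of the $Z$-cell problem. Since $\phi$ is a linear coordinate transformation between the reference cells, the pullback $\phi^*\psi$ of any $\psi\in H^1_\#(Z)/\R$ lies in $H^1_\#(Y)/\R$ and is therefore admissible as a test function in \eqref{eq:RMgencellY}.

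The first step is to derive the transformation rule for the gradient implied by the metric relation $g_Y^{(x)}=\lambda\,\phi^*g_Z^{(x)}$ (equivalent to the hypothesis $\lambda g_Z^{(x)}=\phi_*g_Y^{(x)}$). Starting from the defining identity $g_Y(\grad_Y f,X)=X(f)$ applied to $f=\phi^*h$, and using $X(\phi^*h)=(\phi_*X)(h)\circ\phi$, a short computation yields
\begin{equation*}
\grad_Y^{(x)}(\phi^*h)=\tfrac{1}{\lambda}\,\phi^*(\grad_Z^{(x)}h),\qquad \grad_Z^{(x)}(\phi_*u)=\lambda\,\phi_*(\grad_Y^{(x)}u),
\end{equation*}
together with the companion identity $g_Y^{(x)}(V,\phi^*W)=\lambda\,g_Z^{(x)}(\phi_*V,W)\circ\phi$ for vector fields $V$ on $Y$ and $W$ on $Z$.

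Next I substitute $\varphi=\phi^*\psi$ into the weak form of \eqref{eq:RMgencellY}, apply the two identities above to recast the integrands as pullbacks via $\phi$ of expressions living on $Z$, and perform the linear change of variables $z=\phi(y)$. Linearity of $\phi$ makes $|\det D\phi|$ a constant Jacobian which cancels between the two sides, while $\phi_*D_Y=D_Z$ produces exactly the coefficient $D_Z$ on $Z$. The outcome is
\begin{equation*}
\int_Z D_Z\,g_Z^{(x)}\bigl(\phi_*\grad_Y^{(x)}w_Y^Q,\,\grad_Z^{(x)}\psi\bigr)\,\d z=-\int_Z D_Z\,g_Z^{(x)}\bigl(\phi_*Q,\,\grad_Z^{(x)}\psi\bigr)\,\d z
\end{equation*}
for all $\psi\in H^1_\#(Z)/\R$. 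Comparing with the weak formulation of \eqref{eq:RMgencellZ} and invoking uniqueness modulo constants identifies $\grad_Z^{(x)}w_Z^{\phi_*Q}=\phi_*\grad_Y^{(x)}w_Y^Q$, which is the second stated identity; combining this with the function-level gradient rule $\grad_Z^{(x)}(\phi_*u)=\lambda\,\phi_*\grad_Y^{(x)}u$ and integrating up then yields the first identity $\lambda\,\phi_*w_Y^Q=w_Z^{\phi_*Q}$ up to a $y$-independent constant absorbed into the quotient space.

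The main obstacle will be the careful bookkeeping of the three $\lambda$-factors---one from the pullback gradient rule applied to $\grad_Y\phi^*\psi$, one from the metric relation used to rewrite $g_Y$ in terms of $g_Z$, and the constant Jacobian $|\det D\phi|$ produced by the change of variables---to confirm that they combine into precisely the factor asserted in the lemma. A small preliminary check is also required to ensure that $\phi^*\psi$ genuinely inherits $Y$-periodicity from $\psi$; this follows from the linearity of $\phi$ and the compatibility of the reference cells encoded in Assumption \ref{ass:RMequiv}.
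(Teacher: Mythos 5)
Your overall strategy is the same as the paper's: push the $Y$-cell problem forward under $\phi$, recognize the result as the $Z$-cell problem with datum $\phi_*Q$, and conclude by uniqueness in $H^1_\#(Z)/\R$. The paper does this at the level of the strong formulation, simply citing the commutation relations $\div_Z^{(x)}\circ\phi_*=\phi_*\circ\div_Y^{(x)}$ and $\grad_Z^{(x)}\circ\phi_*=\frac{1}{\lambda}\phi_*\circ\grad_Y^{(x)}$; you work with the weak formulation and derive the needed transformation rules from the metric hypothesis. That is more self-contained but not a genuinely different idea, and your intermediate identity on $Z$ together with the coercivity of $D_Z\,g_Z^{(x)}$ does deliver $\phi_*(\grad_Y^{(x)}w_Y^Q)=\grad_Z^{(x)}w_Z^{\phi_*Q}$ correctly.

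The substantive problem is the final ``integrating up'' step. Your gradient rule reads $\grad_Z^{(x)}(\phi_*u)=\lambda\,\phi_*(\grad_Y^{(x)}u)$, whereas the paper uses the reciprocal, $\grad_Z^{(x)}(\phi_*u)=\frac{1}{\lambda}\,\phi_*(\grad_Y^{(x)}u)$; the two differ by $\lambda\leftrightarrow 1/\lambda$ and only one can be consistent with the stated conclusion. With \emph{your} rule, the identity $\grad_Z^{(x)}w_Z^{\phi_*Q}=\phi_*(\grad_Y^{(x)}w_Y^Q)$ gives
\[
\grad_Z^{(x)}\bigl(\tfrac{1}{\lambda}\phi_*w_Y^Q\bigr)=\tfrac{1}{\lambda}\cdot\lambda\,\phi_*(\grad_Y^{(x)}w_Y^Q)=\grad_Z^{(x)}w_Z^{\phi_*Q},
\]
hence $w_Z^{\phi_*Q}=\frac{1}{\lambda}\phi_*w_Y^Q$ in the quotient space, \emph{not} the asserted $\lambda\,\phi_*w_Y^Q$; testing the latter gives $\grad_Z^{(x)}(\lambda\phi_*w_Y^Q)=\lambda^2\phi_*\grad_Y^{(x)}w_Y^Q$, which matches only for $\lambda=1$. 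So as written your last step is internally inconsistent. The discrepancy traces to the pushforward convention for the covariant tensor $g_Y^{(x)}$: under the convention $(\phi_*g)(W_1,W_2)=g(\phi^*W_1,\phi^*W_2)$ (the one the paper itself uses in Lemma \ref{lem:RMtransfometric}) your rule is the right one and the first conclusion should carry $1/\lambda$, while the paper's cited rule corresponds to the opposite convention. You must either fix the convention and accept the factor $1/\lambda$, or rederive the gradient rule to match the paper's. The second identity and the case $\lambda=1$ (the only one used in the proof of Theorem \ref{thm:independence}) are unaffected, but the lemma is stated for general $\lambda>0$, so the factor has to be pinned down.
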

\begin{proof}
Keep the relations $\div_Z^{(x)} \circ \phi_* = \phi_* \circ \div_Y^{(x)} $ as well as $\grad_Z^{(x)} \circ \phi_* = \frac{1}{\lambda}\phi_*\circ \grad_Y^{(x)}$ in mind, which hold due to the asserted relation between the metrics on $Y$ and $Z$, see e.g.\ \cite{ana3}.
Application of $\phi_*$ to equation \eqref{eq:RMgencellY} yields
\begin{align*}
-\phi_*[\div_Y^{(x)}( D_Y\grad_Y^{(x)} w_Y^Q  )] &= \phi_*{\div_Y^{(x)}(D_Y Q)} \\
\Longleftrightarrow \quad - \div_Z^{(x)} ( D_Z  \grad_Z^{(x)} (\lambda \phi_* w_Y^Q) ) &= \div_Z^{(x)}(D_Z \phi_* Q)  
\end{align*}
Since the solution of \eqref{eq:RMgencellZ} (with $H=\phi_* Q$) is unique, we obtain $\lambda \phi_* w_Y^Q = w_Z^{\phi_* Q}$. Application of $\grad_Z^{(x)}$ to both sides of this identity finally gives
\[
\grad_Z^{(x)} w_Z^{\phi_* Q} = \lambda\grad_Z^{(x)} ( \phi_* w_Y^Q  ) = \phi_*( \grad_Y^{(x)} w^Q_Y ).
\]
\qquad\end{proof}

\label{rem:RMcellpro}
An analogous argument as in the proof above shows that for $\alpha, \beta \in \R$ and $H_1,H_2 \in L^2TZ$ it holds
\[
\alpha w_Z^{H_1} + \beta w_Z^{H_2} = w_Z^{\alpha H_1 + \beta H_2}.
\]




In the sequel, we will use the index notation of coordinate transformations in differential geometry (see e.g. \cite{zeidler4}): Let $\phi^1_\alpha \in \mathscr{A}_1, \phi^2_\beta \in \mathscr{A}_2$ such that $U^1_\alpha \cap U^2_\beta \not= \emptyset$. Writing $\phi^1_\alpha=(x^1, \dots, x^n)$ as well as $\phi^2_\beta=(\tilde{x}^1,\dots, \tilde{x}^n)$ for the components of the charts, one uses the notation $\frac{\p \tilde{x}^i}{\p x^j}(x)$ to denote the $ij$-th entry of the Jacobian matrix of $\phi^2_\beta\circ (\phi^1_\alpha)^{-1}$ at $\phi^1_\alpha(x)$, $x\in U^1_\alpha\subset M$. 

Note two peculiarities due to our Assumptions \ref{ass:RMequiv}: First, $\frac{\p \tilde{x}^i}{\p x^j}(x)$ corresponds to the $ij$-th entry in the matrix representation of the linear map $F$; and second, due to Lemma \ref{lem:RMFunique}, this value is constant on all of $M$. Furthermore, we will switch between the interpretations of $\phi^2_\beta \circ (\phi^1_\alpha)^{-1}$ being a coordinate transformation for $M$ and for $Y$ without using a specific notation.

\begin{lemma}
\label{lem:RMtransfometric}
It holds
\[
(F|_Y)_* g_Y^{(x)} = g_Z^{(x)}.
\]
\end{lemma}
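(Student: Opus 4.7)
The plan is to reduce the claim to the standard tensor transformation law for $g_M$ on $M$ and then transport that identity across $F$. Under Assumption~\ref{ass:RMequiv}, the map $F = \phi^2_\beta \circ (\phi^1_\alpha)^{-1}$ is linear on $\R^n$, so writing $\phi^1_\alpha = (x^1,\ldots,x^n)$ and $\phi^2_\beta = (\tilde{x}^1,\ldots,\tilde{x}^n)$, the Jacobian entries $\tfrac{\p \tilde{x}^k}{\p x^i} = F^k_i$ are constants (independent of $x\in M$ by Lemma~\ref{lem:RMFunique}), and the differential of $F|_Y$ satisfies $\d \tilde{x}^k = \sum_i F^k_i \, \d y^i$.

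First I would unpack the notation in the statement: the coefficients $g_{ij}(x)$ appearing in $g_Y^{(x)}$ are the components of $g_M$ in the chart $\phi^1_\alpha$, i.e.\ $g_{ij}(x) = g_M(x)\bigl(\tfrac{\p}{\p x^i}, \tfrac{\p}{\p x^j}\bigr)$, while the ``$g_{ij}(x)$'' appearing in $g_Z^{(x)}$ are the components of the \emph{same} metric $g_M$ read in the chart $\phi^2_\beta$, namely $\tilde g_{ij}(x) = g_M(x)\bigl(\tfrac{\p}{\p \tilde{x}^i}, \tfrac{\p}{\p \tilde{x}^j}\bigr)$. The classical change-of-coordinate formula for a $(0,2)$-tensor on $M$ then yields
\[
g_{ij}(x) \;=\; \sum_{k,l} F^k_i\, F^l_j\, \tilde g_{kl}(x).
\]

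Next, since $F|_Y$ is a diffeomorphism between the reference cells, it is enough to verify the adjoint statement $(F|_Y)^* g_Z^{(x)} = g_Y^{(x)}$. Computing the pullback of the $(0,2)$-tensor $g_Z^{(x)} = \sum_{k,l}\tilde g_{kl}(x)\, \d \tilde{x}^k \otimes \d \tilde{x}^l$ via the linear map $F|_Y$ and using $\d \tilde{x}^k = \sum_i F^k_i \, \d y^i$ gives
\[
(F|_Y)^* g_Z^{(x)} \;=\; \sum_{k,l} \tilde g_{kl}(x)\Bigl(\sum_i F^k_i\, \d y^i\Bigr) \otimes \Bigl(\sum_j F^l_j\, \d y^j\Bigr) \;=\; \sum_{i,j} \Bigl(\sum_{k,l} F^k_i F^l_j\, \tilde g_{kl}(x)\Bigr)\, \d y^i \otimes \d y^j.
\]
Plugging the transformation identity for $g_M$ into the bracketed coefficient recovers exactly $\sum_{i,j} g_{ij}(x)\, \d y^i \otimes \d y^j = g_Y^{(x)}$, and applying $(F|_Y)_*$ to both sides yields the lemma.

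There is no real obstacle here; the argument is essentially a bookkeeping exercise in tensor transformation, and the linearity of $F$ is used only to ensure that its differential coincides with $F$ itself and that the Jacobian entries are genuine constants (so that the pullback of a constant-coefficient tensor on $Z$ is again constant-coefficient on $Y$, with the coefficients related by precisely the transformation law satisfied by the metric coefficients of $g_M$ under the change of chart on $M$).
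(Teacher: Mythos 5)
Your proof is correct and follows essentially the same route as the paper: both arguments hinge on identifying the Jacobian of $F$ with that of the chart transition on $M$, so that the classical transformation law relating $g_{ij}$ and $\tilde g_{ij}$ translates into the statement that $F|_Y$ intertwines $g_Y^{(x)}$ and $g_Z^{(x)}$. The only cosmetic difference is that you compute the pullback $(F|_Y)^* g_Z^{(x)}$ on the basis one-forms $\d y^i$, whereas the paper evaluates $F_* g_Y^{(x)}$ on pairs of pulled-back vector fields; these are dual formulations of the same calculation.
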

\begin{proof}
By the usual transformation rules for tensor fields, the Riemannian metric $g$ on $M$ has the two local representations
\begin{align*}
g&=\sum_{i,j} g_{ij} \ \d x^i\otimes \d x^j \quad \text{as well as} \quad
g= \sum_{i,j} \tilde{g}_{ij} \ \d \tilde{x}^i \otimes \tilde{x}^j,
\end{align*}
where the coefficients are related via the identity $\tilde{g}_{lk} = \sum_{i,j} g_{ij} \frac{\p x^i}{\p \tilde{x}^l} \frac{\p x^j}{\p \tilde{x}^k}$. By the construction of the induced metric on the reference cell, we obtain the metrics
\begin{align*}
g_Y^{(x)}& = \sum_{i,j} g_{ij}(x) \ud y^i \otimes \d y^j    \quad\text{and} \quad
g_Z^{(x)}= \sum_{i,j} \tilde{g}_{ij}(x) \ud z^i \otimes \d z^j.
 \end{align*}
Let $X\in TZ$ be a vector field in $Z$, with local representation $X=\sum_i X^i \frac{\p}{\p z^i} $. By the transformation rules for vector fields, the local representation of $F^* X\in TY$ is given by $\sum_i ( \sum_l  X^l \frac{\p x^i}{\p \tilde{x}^l}  ) \frac{\p}{\p y^i}$. We obtain for this $X$ and a similar vector field $Y\in TZ$
\begin{align*}
[F_* g_Y^{(x)}](X,Y) &= g_Y^{(x)}(F^*X, F^*Y ) = \sum_{i,j} \Bigl(  g_{ij}(x)\cdot[\sum_l X^l \frac{\p x^i}{\p \tilde{x}^l}] \cdot [\sum_k Y^k \frac{\p x^j}{\p \tilde{x}^k}] \Bigr) \\  
& = \sum_{l,k} \tilde{g}_{lk}(x)  X^l Y^k = g_Z^{(x)}(X,Y).
\end{align*}
This shows that $F_* g_Y^{(x)}= g_Z^{(x)}$.
\qquad\end{proof}


\textit{Proof of the Theorem:}
Taking a look at the proof of Theorem \ref{thm:RMhomogeq} (and especially step 2), we have to show that the expression $\frac{1}{|Y|}\int_Y D[ \grad_M u  +\sum_{i=1}^n  (\grad_M u)^i(\grad_Y^{(x)}  w_i )_M ]   \ud y$ has an "appropriate" transformation behaviour. 
 Put in the framework used in this section, we have to compare the terms (see the expression $K$ in the proof mentioned above)
\begin{gather*}
\sum_{i,k} D_Y (\delta^i_k + (\grad_Y^{(x)} w_Y^{e_i})^k) (\grad_M u)^i  \frac{\p}{\p x^k} \quad\text{and}\quad
\sum_{i,k} D_Z (\delta^i_k + (\grad_Z^{(x)} w_Z^{e_i})^k) (\grad_M u)^i \frac{\p}{\p \tilde{x}^k}.
\end{gather*}
Note that here $(\grad_M u)^i$ in both formulas does \emph{not} signify the same mathematical expression! In the first formula, $(\grad_M u)^i$ denotes the $i$-th component with respect to the local basis $\frac{\p}{\p x^k}$, whereas in the second formula the same term is the $i$-th component with respect to the local basis $\frac{\p}{\p \tilde{x}^k}$. To avoid this notational confusion, we use the representation $\grad_M u = \sum_i X^i \frac{\p}{\p x^i}$. Then, by the transformation rules for vector fields, $\grad_M u = \sum_i ( \sum_l  X^l \frac{\p \tilde{x}^i}{\p x^l} ) \frac{\p}{\p \tilde{x}^i}$. Now we see that $(\grad_M u)^i = X^i$ in the first expression, and $(\grad_M u)^i= \sum_l X^l \frac{\p \tilde{x}^i}{\p x^l}=: \tilde{X}^i$ in the second.

\textbf{Step 1} Transformation of the individual terms:\\
We have 
\begin{align*}
F_*[\sum_{l,m} \delta^m_l (\grad_M u)^l \frac{\p}{\p x^m}   ] & = F_*[\sum_{l,m} \delta^m_l X^l \frac{\p}{\p x^m}   ] = \sum_{l,m,i,k} \delta^k_i X^l \frac{\p \tilde{x}^i}{\p x^l} \frac{\p x^m}{\p \tilde{x}^k} \frac{\p}{\p x^m} 
 = \sum_{i,k} \delta^k_i \tilde{X}^i \frac{\p}{\p \tilde{x}^k}.
\end{align*}
Due to Lemma \ref{lem:RMtransfometric}, we can use the transformation Lemma \ref{lem:RMtransfocell} for the cell problems (with $\lambda=1$) and Remark \ref{rem:RMcellpro} to obtain
\begin{align*}
F_*[ \sum_{i,k} (\grad_Y^{(x)} w_Y^{e_i} )^k X^i \frac{\p}{\p x^k} ] &= \sum_{i,k,m}  (\grad_Y^{(x)} w_Y^{e_i} )^k   X^i  (\frac{\p \tilde{x}^m}{\p {x}^k}  \frac{\p}{\p \tilde{x}^m}  )\\
&= \sum_{i,m} \underbrace{ \sum_k \frac{\p \tilde{x}^m}{\p x^k} (\grad_Y^{(x)} w_Y^{e_i} )^k}_{=(F_* \grad_Y^{(x)} w_Y^{e_i} )^m} X^i \frac{\p}{\p \tilde{x}^m} \\
& = \sum_{i,m} (\grad_Z^{(x)}  w_Z^{F_* e_i} )^m X^i\frac{\p}{\p \tilde{x}^m} \\
& = \sum_{i,m} (\grad_Z^{(x)}  w_Z^{\sum_k  \frac{\p \tilde{x}^k}{\p x^i} e_k} )^m X^i \frac{\p}{\p \tilde{x}^m} \\
& = \sum_{k,m} (\grad_Z^{(x)} w_Z^{e_k})^m (\sum_i  X^i \frac{\p \tilde{x}^k}{\p x^i} )\frac{\p}{\p \tilde{x}^m} \\
& = \sum_{k,m} (\grad_Z^{(x)} w_Z^{e_k})^m \tilde{X}^k \frac{\p}{\p \tilde{x}^m}.
\end{align*}

\textbf{Step 2} Transformation of the integrals:\\
Keeping in mind $F_* D_Y = D_Z$, $F_*(\d y^1 \dots \d y^n) = |\det(F^{-1})| \d z^1 \dots \d z^n$ and the formulas derived in step 1, we can apply the pushforward $F_*$ to the integral $\frac{1}{|Y|} \int_Y D_Y [ \grad_M u  +\sum_{i=1}^n  (\grad_M u)^i(\grad_Y^{(x)}  w_Y^{e_i} )_M ]   \ud y$ to obtain
\begin{align*}
F_* &  \Bigl[  \frac{1}{|Y|} \int_Y \Bigl(\sum_{i,k} D_Y (\delta^i_k + (\grad_Y^{(x)} w_Y^{e_i} )^k)(\grad_M u)^i \frac{\p}{\p x^k} \Bigr) \ud y^1 \dots \d y^n \Bigr] \\
& = \frac{1}{|Y|} \int_Z F_* D_Y \Bigl(  F_*[\sum_{i,k} \delta^i_k X^i \frac{\p}{\p x^k}  ]  + F_*[  \sum_{i,k}   \sum_{i,k} (\grad_Y^{(x)} w_Y^{e_i} )^k X^i \frac{\p}{\p x^k}   ]      \Bigr) \, F_*[\d y^1 \dots \d y^n  ] \\
& = \frac{1}{|Y|}  |\det(F^{-1})| \int_Z D_Z \Bigl( \sum_{l,m} \delta^l_m \tilde{X}^m \frac{\p}{\p \tilde{x}^l} + (\grad_Z^{(x)} w_Z^{e_m})^l \tilde{X}^m \frac{\p}{\p \tilde{x}^l}   \Bigr) \ud z^1 \dots \d z^n.
\end{align*}
Since $\frac{|\det(F^{-1})|}{|Y|}= \frac{1}{|Z|}$ by the transformation rule for integrals, we finally get that the last term is equal to
\[
\frac{1}{|Z|} \int_Z D_Z ( \sum_{i,k} \delta^i_k + (\grad_Z^{(x)} w_Z^{e_i}  )^k )\tilde{X}^i \frac{\p}{\p \tilde{x}^k} \ud z,
\]
which is (written with respect to the local basis $\frac{\p}{\p \tilde{x}^k}$) nothing else but \[\frac{1}{|Z|} \int_Z  D_Z[  \grad_M u  + \sum_{i=1}^n (\grad_M u)^i (\grad_Z^{(x)}  w_Z^{e_i} )_M  ]   \ud z. \] Thus we see that the expression constituting the homogenised problem is invariant under a change of the atlas which satisfies Assumptions~\ref{ass:RMequiv}.


\bibliography{../../literatur}







\end{document}